\documentclass[10pt]{article}
\usepackage{hyperref}
\hypersetup{
	colorlinks=true, 
	urlcolor=blue, 
	linkcolor=blue, 
	citecolor=red    
}
\usepackage{fancyhdr}
\usepackage{textcomp}
\usepackage[mathscr]{eucal}
\usepackage{amsmath,amsfonts,amsthm,amscd,amssymb,scalefnt}
\usepackage[margin=1in]{geometry}    
\usepackage{graphicx}                
\usepackage{subfig}
\usepackage{caption}
\captionsetup[subfloat]{labelfont={bf,sf,small},labelsep=period,justification=raggedright}
\usepackage{epstopdf}
\usepackage{multirow}
\usepackage[shortlabels]{enumitem}
\fancyhead[R]{\thepage}


\numberwithin{equation}{section}
\usepackage{cite}

\hyphenation{oddword}
\setcounter{MaxMatrixCols}{30}
\DeclareMathOperator{\sech}{sech}

\DeclareMathOperator{\arccosh}{arccosh}

\DeclareMathOperator{\sign}{sign}

%
\newtheorem{theorem}{Theorem}[section]
\newtheorem{corollary}[theorem]{Corollary}
\newtheorem{definition}[theorem]{Definition}
\newtheorem{remark}[theorem]{Remark}

\newtheorem{example}[theorem]{Example}
\newcommand{\thedate}{\today}
\makeatother

\begin{document}

\begin{center}
	\Large \bf{Some Characterizations of Timelike Rectifying Curves\\ in De Sitter 3-Space}
\end{center}
\vspace{10pt}
\centerline{\large  Mahmut MAK\footnote[1]{E-mail: mmak@ahievran.edu.tr\hfill\thedate}}
\vspace{10pt}
\centerline{\it K{\i}r\c{s}ehir Ahi Evran University, The Faculty of Arts and Sciences, Department of Mathematics, K{\i}r\c{s}ehir, Turkey.}
\vspace{10pt}

\begin{abstract} De Sitter space is a non-flat Lorentzian space form with positive constant curvature which plays an important role in the theory of relativity. In this paper, we define the notions of timelike rectifying curve and timelike conical surface in De Sitter 3-space as Lorentzian viewpoint. Moreover, we give some nice characterizations and results of a timelike  rectifying curves with respect to curve-hypersurface frame in De Sitter 3-space which is a three dimensional pseudo-sphere in Minkowski 4-space.
\end{abstract}

$~~~${\bf Keywords:} Rectifying curve, conical surface, geodesic, extremal curve, spiral curve.\smallskip

$~~~${\bf MSC2010:}$~~$53A35, 53C25.

\section{Introduction} \label{Sec:1}

In Euclidean 3-space $\mathbb{R}^3 $, let $ x:I \subseteq \mathbb{R} \to \mathbb{R}^3 $ be an unit speed regular curve with Frenet-Serret aparatus $ \left\{ {T,N,B,\kappa,\tau} \right\} $ where nonzero curvature $ \kappa $ and torsion $ \tau $ of the curve. At each point of the curve, the planes spanned by $ \left\{ {T,N} \right\} $, $ \left\{ {T,B} \right\} $ and $ \left\{ {N,B} \right\} $ are known as the osculating plane, the rectifying plane, and the normal plane, respectively. In  $\mathbb{R}^3$, it is well-known that a curve lies in a plane if its position vector lies in its osculating plane at each point; and it lies on a sphere if its position vector lies in its normal plane at each point.  In view of these basic facts, in $\mathbb{R}^3 $, the notion of rectifying curve  which is a space curves whose position vectors always lie in its rectifying planes, is firstly introduced by Bang-Yen Chen \cite{Chen2003}. Thus, the position vector $ x(s) $ of a rectifying curve satisfies the equation
\begin{eqnarray}\label{eq1_01}
x(s)-p = {c_1}(s)T(s) + {c_2}(s)B(s)
\end{eqnarray}
such that the fixed point $ p \in \mathbb{R}^3 $ for some differentiable functions $ c_1 $ and $ c_2 $ in arc length parameter $ s $ \cite{Chen2003,Kim1993}. It is known that a non-planar (twisted) curve in $\mathbb{R}^3 $ is a generalized helix if and only if the ratio $ \tau/\kappa $ is a nonzero constant on the curve. However, Chen show that for any given regular curve in $\mathbb{R}^3 $ is satisfied $ \tau/\kappa = c_1 s +c_2$ for some constants $ c_1\neq0 $ and $ c_2 $ in arc length parameter $ s $ iff the curve is congruent to a rectifying curve \cite{Chen2003}. Centrode is the path of the instantaneous center of rotation. It plays an important role in mechanics and kinematics. In  $\mathbb{R}^3 $, Darboux vector of a regular curve with a nonzero curvature is defined by $ D=\tau T + \kappa B $. However, the position vector of a rectifying curve is always in the direction of the Darboux vector which corresponds the instantaneous axis of rotation. Therefore, there is a hard relationship between the centrode and the rectifying curve. In this sense, Chen and Dillen give a relationship between rectifying curves and centrodes of space curves in \cite{Chen2005}. They study also rectifying curve as extremal curves and give a classification of curves with nonzero constant curvature and linear torsion in terms of spiral type rectifying curves in \cite{Chen2005}. After Chen's articles \cite{Chen2003,Chen2005}, rectifying curves and theirs characterizations are studied by many authors in different ambient spaces from various viewpoints. In this concept, some remarkable papers are \cite{Altunkaya2018space,Chen2017,Deshmukh2018,Ilarslan2008,Ilarslan2003,Izu2004New}. Moreover, the eq. (\ref{eq1_01}) means that "the straight line that passing through $ x(s) $ and the fixed point $ p $, is orthogonal to the principal normal line that starting at point $ x(s) $ with the direction of $ N(s) $". In this sense, Lucas and Yag\"{u}es give the concept of rectifying curves in three dimensional Spherical and hyperbolic space from the viewpoint of Riemannian Space Forms by using this idea in \cite{luc2015, luc2016}. 

It is well known that De Sitter space is a non-flat Lorentzian space form with positive constant curvature. Also, De Sitter 3-space is called a three dimensional pseudo-sphere in Minkowski 4-space as a semi-Riemannian hypersurface. Especially, De Sitter space is one of the vacuum solutions of the Einstein equations so it plays an important role in the theory of relativity.

In this study, as inspiration from \cite{luc2015, luc2016}, we introduce the notions of timelike rectifying curve with respect to curve-hypersurface frame and timelike conical surface in De Sitter 3-space as non-flat Lorentzian space form viewpoint. After, we give relationship between timelike rectifying curve and geodesic of timelike conical surface in De Sitter 3-space. Moreover, we obtain a nice characterization with respect to the ratio of geodesic torsion and geodesic curvature for timelike rectifying curves in De Sitter 3-space. However, we have a characterization which determines all timelike rectifying curve in De Sitter 3-space. Finally, in viewpoint of extremal curves, we give a corollary that a timelike curve in  De Sitter 3-space, which has non-zero constant geodesic curvature and linear geodesic torsion, congruent to a timelike rectifying curve, which is generated by a spiral type unit speed timelike curve with certain geodesic curvature in 2-dimensional pseudo-sphere, and vice versa.

\section{Preliminary} \label{Sec:2}

We give the fundamental notions for motivation to differential geometry of timelike curves and timelike surface in De Sitter 3-space and Minkowski 4-space. For more detail and background, see \cite{ChenLiang2016,HuangJie2019,luc2013,o1983semi}. Let $\mathbb{R}^4$ be a 4-dimensional real vector space and a scalar product in $\mathbb{R}^4 $ be defined by
\begin{eqnarray*}
	\langle{\boldsymbol{x}\,,\boldsymbol{y}\,}\rangle=-{x_1}{y_1}+{x_2}{y_2}+{x_3}{y_3}+{x_4}{y_4},
\end{eqnarray*}	
for any vectors $\boldsymbol{x}=(x_1,x_2,x_3,x_4),\,\textit{\textbf{y}}=(y_1,y_2,y_3,y_4)\in\mathbb{R}^4$. Then the pair $\left( {{\mathbb{R}^4},\left\langle , \right\rangle } \right) $ is called Minkowski 4-space (four-dimensional semi Euclidean space with index one), which is denoted by $\mathbb{R}_1^4$. We say that a nonzero vector $\boldsymbol{x} \in \mathbb{R}_1^4$ is called \textit{spacelike}, \textit{timelike} and \textit{null} if $\langle{\boldsymbol{x}\,,\boldsymbol{x}\,}\rangle>0\,$, $\langle{\boldsymbol{x}\,,\boldsymbol{x}\,}\rangle<0$ and $\langle{\boldsymbol{x}\,,\boldsymbol{x}\,}\rangle=0$, respectively. The \textit{norm} of  $\boldsymbol{x} \in \mathbb{R}_1^4$ is defined by $\left\| \boldsymbol{x} \right\| = \sqrt {\left| {\left\langle {\boldsymbol{x}\,,\boldsymbol{x}\,} \right\rangle } \right|} $. The \textit{signature} of a vector $ \boldsymbol{x} $ is defined by $ \sign( \boldsymbol{x})=1,\,0$ or $ -1 $ while  $ \boldsymbol{x}$ is spacelike, null or timelike, respectively.
The De Sitter 3-space is defined by
	\begin{eqnarray*}
		\mathbb{S}_1^3=\{\boldsymbol{x}\in\mathbb{R}_1^4\ \,|\,\,\langle{\boldsymbol{x}\,,\boldsymbol{x}\,}\rangle= 1\,\},
	\end{eqnarray*}	
which is a three dimensional unit pseudo-sphere (or a non-flat Lorentzian space form with positive constant curvature one) in  $ \mathbb{R}_1^4 $.

Let $\{{\boldsymbol{e}_1,\boldsymbol{e}_2,\boldsymbol{e}_3,\boldsymbol{e}_4}\}$ be the canonical basis of $\mathbb{R}_1^4$. Then the wedge product of any vectors $ \boldsymbol{x}=(x_1,x_2,x_3,x_4)$,  $\boldsymbol{y}=(y_1,y_2,y_3,y_4)$, $\boldsymbol{z}=(z_1,z_2,z_3,z_4) \in \mathbb{R}_1^4$ is given by
\begin{eqnarray*}\label{eq2_01}
	{\boldsymbol{x}}  \times  {\boldsymbol{y}}  \times  {\boldsymbol{z}} = \left| {\begin{array}{cccc}
			-{\boldsymbol{e}_1} & {\boldsymbol{e}_2} & {\boldsymbol{e}_3} & {\boldsymbol{e}_4} \\
			{x_1} & {x_2} & {x_3} & {x_4} \\
			{y_1} & {y_2} & {y_3} & {y_4} \\
			{z_1} & {z_2} & {z_3} & {z_4}
	\end{array}} \right|,
\end{eqnarray*}
where $\{{\boldsymbol{e}_1,\boldsymbol{e}_2,\boldsymbol{e}_3,\boldsymbol{e}_4}\}$ is the canonical basis of $\mathbb{R}_1^4$. Also, it is clear that
\begin{eqnarray}\label{eq2_02}
\langle \boldsymbol{w}\,,{\boldsymbol{x}}  \times  {\boldsymbol{y}}  \times  {\boldsymbol{z}}\rangle  = \det (\boldsymbol{w},\boldsymbol{x},\boldsymbol{y},\boldsymbol{z}),
\end{eqnarray}
for any $\boldsymbol{w}\in\mathbb{R}_1^4$. Hence, ${\boldsymbol{x}}  \times  {\boldsymbol{y}}  \times  {\boldsymbol{z}}$ is pseudo-orthogonal to each of the vectors $\boldsymbol{x},\boldsymbol{y}$ and $\boldsymbol{z}$. In the tangent space $ {T_q}\mathbb{S}_1^3 $ at any point $ q \in \mathbb{S}_1^3 $, we can give a cross product is denoted by  $ "\wedge" $ which is induced from the wedge product $ "\times" $ in $ \mathbb{R}_1^4. $ Let $ \boldsymbol{u},\,\boldsymbol{v} $ be tangent vectors (as considered column vectors of $ \mathbb{R}_1^4 $) in $ {T_q}\mathbb{S}_1^3 \subset \mathbb{R}_1^4$. Then the cross product $ \boldsymbol{u} \wedge \boldsymbol{v} $ in $ {T_q}\mathbb{S}_1^3 $ is given by
\begin{eqnarray}\label{eq2_02a}
\boldsymbol{u} \wedge \boldsymbol{v} = q \times \boldsymbol{u} \times \boldsymbol{v}.
\end{eqnarray}
By using (\ref{eq2_02}) and (\ref{eq2_02a}), it is easy to see that
\begin{eqnarray}\label{eq2_02b}
\left\langle {\boldsymbol{w},\boldsymbol{u} \wedge \boldsymbol{v}} \right\rangle  = -\det (q,\boldsymbol{u},\boldsymbol{v},\boldsymbol{w}),
\end{eqnarray}
for every $\boldsymbol{w} \in {T_q}\mathbb{S}_1^3 \subset \mathbb{R}_1^4$. Hence, we see that the orientations of a basis $\left\{ {{\bf{u}},{\bf{v}},{\bf{w}}} \right\} $ in $ {T_q}\mathbb{S}_1^3 $ and a basis $  \left\{ {q,{\bf{u}},{\bf{v}},{\bf{w}}} \right\} $ in $ \mathbb{R}_1^4 $ are opposite. Let $ M=\Phi(U) $ be regular surface which is identified by a immersion $ \Phi:U \subseteq {\mathbb{R}^2} \to \mathbb{S}_1^3 \subset \mathbb{R}_1^4 $ where $ U $ is open subset of $  {\mathbb{R}^2} $. Then, $ M $ is called spacelike or timelike surface in $\mathbb{S}_1^3$, if the tangent plane $ T_p{M} $ at any point $ p\in M$ is a spacelike subspace (it contains only spacelike vectors) or timelike subspace (i.e. it contains timelike, spacelike or null vectors) in $ \mathbb{R}_1^4 $, respectively.

Let $ M=\Phi(U) $ be non-degenerate (spacelike or timelike) surface in $ \mathbb{S}_1^3 $, and $ \xi $ be unit normal vector field of $ M $ such that $ \left\langle {\xi,\xi} \right\rangle  = \varepsilon  =  \pm 1 $. Then, for any differentiable vector fields  $ X,Y\in\mathfrak{X}(M) \subset \mathfrak{X}(\mathbb{S}_{1}^{3}) $, Gauss formulas of $ M $ are given by
\begin{eqnarray}
{\nabla^0}_X{Y} &=&{{\overline \nabla_X}Y - \left\langle {X,Y} \right\rangle \Phi},\label{eq2_02c}\\
{\overline \nabla}_X{Y} &=&{{\nabla_X}Y + \varepsilon \left\langle {\cal S}(X),Y \right\rangle \xi},\label{eq2_02d}
\end{eqnarray}
and  ${\cal S}:\mathfrak{X}(M) \to \mathfrak{X}(M)$ Weingarten map of $ M $ is given by
\begin{eqnarray}\label{eq2_02e}
{\cal S}(X)=-{\overline \nabla_X}\xi
\end{eqnarray} 
where Levi-Civita connections of ${\mathbb{R}_1^4} $, $\mathbb{S}_{1}^{3}$ and $ M $ are denoted by $ {\nabla }^0  $, $ {\overline \nabla } $ and $ \nabla $, respectively.

\begin{remark}\label{rmrk2_01}
	Let $ \Gamma$ be spacelike (or timelike) three dimensional hyperplane that passing through the origin in $\mathbb{R}_{1}^{4}$, then the surface $ \Gamma \cap \mathbb{S}_1^3 $ is congruent to unit sphere $\mathbb{S}^2$ ( or unit pseudo-sphere $ \mathbb{S}_1^2 $ ), which is a  spacelike (or timelike) totally geodesic surface in  $ \mathbb{S}_1^3 $. Moreover, let $ \Pi $  be spacelike (or timelike) plane that passing through the origin in $\mathbb{R}_{1}^{4}$, then  the curve $ \Pi \cap \mathbb{S}_1^3 $ is congruent to unit circle $ \mathbb{S}^1 $ ( or part of unit pseudo-circle $ \mathbb{S}_1^1 $ ), which is a  spacelike (or timelike) geodesic in  $ \mathbb{S}_1^3 $.
\end{remark}

Let $ p $ and $ q $ be distinct non-antipodal points of $\mathbb{S}_1^3 $, and $ \beta=\Pi \cap \mathbb{S}_1^3 $ be a geodesic that passing through the points $ p $ and $ q $ in  $ \mathbb{S}_1^3 $, where $  \Pi = Sp\left\{ {p,q} \right\}$. Then, for a vector $ \omega = q - \left\langle {p,q} \right\rangle p \in \mathbb{R}_{1}^{4}$, the parametrization of $ \beta = \beta(t)$ is given by
\begin{enumerate}[label={\upshape(\roman*)}, align=left, widest=i, leftmargin=*]\vspace{-3pt}
	\item If $ \left\langle {p,q} \right\rangle  > 1 $ (i.e. the angle $ \theta(p,q) =\arccosh\left( {\left\langle {p,q} \right\rangle } \right)$), then  $ \Pi $ is a timelike plane and $ \beta(t) = \cosh (t)p + \sinh (t)\frac{\omega}{{\left\| \omega \right\|}}$ (i.e a part of pseudo-circle) such that $ \sign(\omega)=-1$,
	\item If $ -1 < \left\langle {p,q} \right\rangle < 1 $ (i.e. the angle $ \theta(p,q) =\arccos\left( {\left\langle {p,q} \right\rangle } \right)$), then  $ \Pi $ is a spacelike plane and $ \beta(t) = \cos (t)p + \sin (t)\frac{\omega}{{\left\| \omega \right\|}}$ (i.e a circle) such that $ \sign(\omega) =1$,
	\item If  $ \left\langle {p,q} \right\rangle = 1 $, then  $ \Pi $ null plane and $ \beta(t) = p + t\,\omega $ (i.e a straight line) such that $ \sign(\omega)=0 $,	
	\item If $ \left\langle {p,q} \right\rangle  < -1 $, then there exists no geodesic joining $ p $ and $ q $. 
\end{enumerate}

Now, we consider the differential geometry of timelike regular curves in $\mathbb{S}_1^3$. Let $\alpha :I \to \mathbb{S}_1^3$ be a regular curve where $I$ is an open interval in $\mathbb{R}$. Then, the Gauss formula with respect to $ \alpha$ is given by
\begin{eqnarray}\label{eq2_03}
X'  \equiv  {{\nabla }^0 _{\alpha'}}X = {\overline \nabla  _{\alpha'}}X -\left\langle {\alpha',X} \right\rangle \alpha,
\end{eqnarray} 
for any differentiable vector field $ X\in\mathfrak{X}(\alpha(I))  \subset \mathfrak{X}(\mathbb{S}_{1}^{3}) $ along the curve $ \alpha $. We say that the regular curve $ \alpha $ is spacelike, null or timelike if  $ {\alpha}'(t)= {d\alpha}/{dt} $ is a spacelike vector, a null vector or a timelike vector, respectively, for any  $ t\in\textit{I} $. The regular curve $ \alpha $ is said to be a \textit{non-degenerate} (non-null) curve if  $ \alpha $ is a spacelike curve or a timelike curve. If $\alpha $ is a non-null curve, $ \alpha $ can be expressed with an arc length parametrization $ s=s(t) $.

Now, we assume that $ \alpha = \alpha(s) $ is a timelike unit speed curve in $\mathbb{S}_1^3$. Then the timelike unit tangent vector of $\alpha$ is given by $ {T_\alpha}(s) = \alpha'(s) $. We assume that  the assumption $T_{\alpha}'(s)-\alpha(s) \neq 0$, then the spacelike principal normal vector of $ \alpha $ is given by $ {N_\alpha}(s) = \frac{{{{\overline \nabla }_{{T_\alpha}(s)}}{T_\alpha}}(s)}{{\left\| {{{\overline \nabla }_{{T_\alpha}(s)}}{T_\alpha}(s)} \right\|}}$, which is pseudo-orthogonal to $\alpha(s)$ and $T_{\alpha}(s)$. Also, the spacelike binormal vector field of $ \alpha $ is given by  $ {B_\alpha}(s) = {T_\alpha}(s) \wedge {N_\alpha}(s) $ which is pseudo-orthogonal $ \alpha(s),\,{T_\alpha}(s)$ and $ {N_\alpha}(s) $. Thus, $ \left\{ {{T_\alpha}(s),\,{N_\alpha}(s),\,{B_\alpha}(s)} \right\} $ is called (intrinsic) Frenet Frame of non-geodesic timelike curve $ \alpha $ in ${T_{\alpha(s)}}\mathbb{S}_1^3 $ along the curve $ \alpha $. Also, from the equations (\ref{eq2_02a}) and (\ref{eq2_02b}), we see that \mbox{$B_{\alpha}(s) = \alpha(s) \times T_{\alpha}(s) \times N_{\alpha}(s)$}, and so we have pseudo-orthonormal frame $\{\alpha(s),T_{\alpha}(s),N_{\alpha}(s),B_{\alpha}(s)\}$ of $\mathbb{R}_1^4$ along $\alpha$. The frame is also called the \textit{curve- hypersurface frame} of timelike unit speed curve $ \alpha $ on $\mathbb{S}_1^3$.
By using Gauss formula (\ref{eq2_03}), under the assumptions $T_{\alpha}'(s)-\alpha(s) \neq 0$, the Frenet equations of $ \alpha $ in $ \mathbb{S}_1^3 $ is given by
\begin{eqnarray}\label{eq2_08}
{\overline \nabla  _{{T_\alpha}}}{T_\alpha} = {\kappa_g}{N_\alpha},\quad {\overline \nabla  _{{T_\alpha}}}{N_\alpha} = {\kappa_g}{T_\alpha} + {\tau_g}{B_\alpha},\quad {\overline \nabla  _{{T_\alpha}}}{B_\alpha} =  - {\tau_g}{N_\alpha},
\end{eqnarray}
where the geodesic curvature $ \kappa_{g} $ and the geodesic torsion $ \tau_g $ of $\alpha$ is given by
\begin{eqnarray}
{\kappa_g}(s)&=&\left\| {{{\overline \nabla  }_{T_{\alpha}(s)}}{T_{\alpha}}(s)} \right\| =  \left\| {T_{\alpha}'(s)-\alpha(s)} \right\|,\label{eq1_2_05a}\\
{\tau_g}(s)&=&{ \left\langle {{\overline \nabla  }_{{T_\alpha}(s)}}{N_\alpha}(s),{B_\alpha}(s) \right\rangle }=\frac{\det (\alpha(s),\alpha'(s),\alpha''(s),\alpha'''(s))}{{{{({\kappa_g}(s))}^2}}}\label{eq1_2_05b}.
\end{eqnarray}
By using Gauss formula (\ref{eq2_03}), Frenet equations of $ \alpha $ is also given by
\begin{eqnarray}\label{eq2_06}
\nabla _{{T_\alpha}}^0{T_\alpha} = \alpha + {\kappa_g}{N_\alpha},\quad \nabla _{{T_\alpha}}^0{N_\alpha} = {\kappa_g}{T_\alpha} + {\tau_g}{B_\alpha},\quad \nabla _{{T_\alpha}}^0{B_\alpha} =  - {\tau_g}{N_\alpha},
\end{eqnarray}
with respect to Levi-Civita connection of $ \mathbb{R}_1^4 $.

A non-degenerate curve in $\mathbb{S}_1^3$ is a geodesic iff its geodesic curvature $ \kappa_{g} $ is zero at all points. By using  (\ref{eq2_03}) and (\ref{eq1_2_05a}), we see that the assumption $T_{\alpha}'(s)-\alpha(s) \neq 0$ (or equivalently $< \alpha''(s),\alpha''(s) > \neq 1$) corresponds to the curve $ \alpha $ is not a geodesic (i.e. $\kappa_{g}\neq0 $). A non-degenerate curve in $\mathbb{S}_1^3$ is a planar curve iff it lies in a non-degenerate two-dimensional totally geodesic surface (i.e. geodesic torsion $ \tau_g $ is zero at all point) in  $\mathbb{S}_1^3$. By Remark \ref{rmrk2_01}, we say that timelike planar curve in $ \mathbb{S}_1^3 $, lies fully in two-dimensional unit pseudo-sphere $ \mathbb{S}_1^2 \subset \mathbb{S}_1^3 $. 	

\begin{remark}\label{rmrk2_02}
	Let $ M=\Phi(U) $ be non-degenerate surface in $ \mathbb{S}_1^3 $ with the unit normal vector field $ \xi $, and $ {T_\beta} $ be the unit tangent vector field of a non-degenerate unit speed curve $\beta$ which lies on the surface $ M $. Then, $ \beta $ is a geodesic of $ M $ iff $\nabla_{T_\beta}{T_\beta}=0$. By using (\ref{eq2_02d}), we conclude that $\overline \nabla_{T_\beta}{T_\beta}$ is parallel to $ \xi $. Namely, $\overline \nabla_{T_\beta}{T_\beta}$ is orthogonal to the surface $ M $.
\end{remark}

Let $ p\in \mathbb{S}_1^3 $ and $ w\in {T_{p}\mathbb{S}_1^3} $. The exponential map $ {\exp _p}:{T_p}\mathbb{S}_1^3 \to \mathbb{S}_1^3 $ at $ p\in \mathbb{S}_1^3$ is defined by $ {\exp_p}(w) = {\beta_w}(1) $ where $ {\beta_w}:[0,\infty) \to \mathbb{S}_1^3 $ is the constant speed geodesic starting from $ p $ with the initial velocity $ {\beta'_{w}}(0) = w $. Also, the property $ {\exp _p}(tw) = {\beta_{tw}}(1) = {\beta_w}(t) $ is satisfied for any $ t\in \mathbb{R} $. In that case, for any point $ \alpha(s) $ in the timelike curve $ \alpha $ , the spacelike principal normal geodesic in $ \mathbb{S}_1^3 $
starting at $ \alpha(s)$ is defined by the geodesic curve
\begin{eqnarray*}
	{\exp _{\alpha(s)}}(t{N_\alpha}(s)) = \cos (u)\alpha(s) + \sin (u){N_\alpha}(s),\, t\in\mathbb{R}.
\end{eqnarray*}

Let's remind an important property of the parallel transport. A vector field which is pseudo-orthogonal to tangent vector of a geodesic $ {\beta_w}(t)= {\exp _p}(tw) $ in  $ \mathbb{S}_1^3 $, is invariant under parallel transport $ \cal{P} $ along the geodesic. Thus, the parallel transport $ {\cal{P}} $ from $ \alpha(s) $ to $ {\exp _{\alpha(s)}}(t{N_\alpha}(s)) $ along the spacelike principal normal geodesic satisfies the following statements:
\begin{eqnarray*}
	{\cal{P}}({N_\alpha}(s)) =  - \sin (u)\alpha(s) + \cos (u){N_\alpha}(s),
\end{eqnarray*}
and 
\begin{eqnarray*}
	{\cal{P}}({T_\alpha}(s)) = {T_\alpha}(s),~{\cal{P}}({B_\alpha}(s)) = {B_\alpha}(s).
\end{eqnarray*}

\section{Timelike rectifying curves in $\mathbb{S}_1^3 $} \label{Sec:3}

In this section, we give the notions of timelike rectifying curve and timelike conical surface  $ \mathbb{S}_1^3 $. After, we obtain some characterizations for timelike rectifying curves in $ \mathbb{S}_1^3 $.

\begin{definition}\label{def3_01}
	Let $ \alpha=\alpha(s) $ be a timelike non-geodesic unit speed curve in $ \mathbb{S}_1^3 $ and $ p $ be a fixed point in $\mathbb{S}_1^3 $ such that $ \left\{ { \pm p} \right\} \notin {\rm{Im}}(\alpha) $. Then  $ \alpha$ is called a timelike rectifying curve  in $ \mathbb{S}_1^3 $ iff the geodesics in $\mathbb{S}_1^3 $ that passing through $ p $ and $ \alpha(s) $ are pseudo-orthogonal to the spacelike principal normal geodesics at $ \alpha(s) $ for every $ s $.
\end{definition}

From the Definition \ref{def3_01}, the geodesics that passing through $ p $ and $ \alpha(s) $ are tangent to the timelike rectifying planes $ S_p\left\{T_\alpha{(s)},B_\alpha{(s)}\right\}$ of $ \alpha $ in  ${T_{\alpha(s)}}{\mathbb{S}_{1}^3 } \subset \mathbb{R}_1^4$. Namely, $ \frac{d}{dt}\beta_s{(t)} \in S_p\left\{T_\alpha{(s)},B_\alpha{(s)}\right\}$, and so it is easily seen that rectifying condition is given by 
\begin{eqnarray*}
	\left\langle \frac{d}{dt}\beta_s{(t)},N_\alpha{(s)}\right\rangle =0,\,t\in \mathbb{R},
\end{eqnarray*}
where $	\beta_s(t) $ is a geodesic that passing through $ p $ and $ \alpha(s) $ such that
\begin{eqnarray*}
	\beta_s(t)=\exp_p(t\,\alpha(s))=\cos(t)p+\sin(t)\alpha(s),\, t\in\mathbb{R}.
\end{eqnarray*}

\begin{remark}\label{rmrk_04}
	The principal normal vector field of any non-degenerate planar curve in $ \mathbb{S}_1^3 $ is tangent to $ \mathbb{S}_1^2 \subset \mathbb{S}_1^3 $ or $ {\mathbb{S}^2} \subset \mathbb{S}_1^3 $ since its geodesic torsion is zero at all points. So the tangent vector of geodesic connecting the planar curve with any point which is not element to the curve's image is orthogonal to principal normal vector field of the planar curve (see \cite{luc2015}). Hence, we say that every non-degenerate planar curve in $ \mathbb{S}_1^3 $ is a rectifying curve. From now on, we will assume that the curve $ \alpha $ is a timelike non-geodesic (i.e. $ \kappa_{g}>0 $) and non-planar curve (i.e. $ \tau_g\neq0 $) in $ \mathbb{S}_1^3 $.	
\end{remark}

\begin{definition}\label{def3_02}
	Let $ M=\Phi(U) $ be a timelike regular surface in $\mathbb{S}_1^3 $ via timelike immersion $ \Phi:U \to \Phi(U) \subset \mathbb{S}_1^3 $  such that open subset $ U \subseteq {\mathbb{R}^2} $. Then $ M $ is called a timelike conical surface in $ \mathbb{S}_1^3 $ if and only if $ M $ is constructed by the union of all the geodesics that pass through a fixed point (the apex) $ p\in \mathbb{S}_1^3 $ and any point of some regular timelike curve (the directrix) that does not contain the apex. Also, each of those geodesics is called a generatrix of the surface.
\end{definition}

Let the fixed point $p \in \mathbb{S}_1^3$ be apex of $ M $, and $ \gamma $ be the directrix of $ M $, which is a timelike unit speed curve in $ \mathbb{S}_1^2 \subset {T_p}\mathbb{S}_1^3 \subset \mathbb{R}_1^4 $. Then the parametrization of timelike conical surface $ M$ is given by 
\begin{eqnarray}{\label{eq3_01}}
\Phi(u,v) = {\exp _p}(v\gamma(u)) = \cos (v)p + \sin (v)\gamma(u),\quad 0 < v.
\end{eqnarray}

Let $ \cal{P} $ be the parallel transport along the geodesic $\beta_u(t)={\exp _p}(t\,\gamma(u))=\Phi(u,t)$ that passing through $ p $ and $ \gamma(u) $. Also, it is known that a vector field which is pseudo-orthogonal to $ \frac{d}{{dt}}{\beta_u}(t) $, is invariant under  parallel transport $ \cal{P} $ along the geodesic $ {\beta_u} $. Then the timelike tangent plane of $ M $ is spanned by the timelike vectors ${\Phi_u}(u,v)$ and spacelike vector ${\Phi_v}(u,v) $ is given by
\begin{eqnarray}{\label{eq3_02}}
{\Phi_u}(u,v)&=&\sin(v)\gamma'(u) = \sin(v){\cal{P}}(\gamma'(u)), \\ {\label{eq3_03}}
{\Phi_v}(u,v)&=&{\rm{ - }}\sin(v)p + \cos(v)\gamma(u) = {\cal{P}}(\gamma(u)).
\end{eqnarray}

Coefficients of the first fundamental form of $ M $ is 
\begin{eqnarray}\label{eq3_02a}
E=\left\langle {\Phi _u},{\Phi _u}\right\rangle  = -{\sin ^2}(v) ,\quad F= \left\langle {\Phi _u},{\Phi _v}\right\rangle  = 0,\quad
G= \left\langle {\Phi _v},{\Phi _v}\right\rangle  = 1.
\end{eqnarray}

The spacelike unit normal vector field $\xi(u,v)$ of $ M $ is given by
\begin{eqnarray}{\label{eq3_04}}
\xi(u,v) = \frac{{{\Phi_u} \wedge {\Phi_v}}}{{\left\| {{\Phi_u} \wedge {\Phi_v}} \right\|}}\left( {u,v} \right) = \frac{{\sin \left( v \right)}}{{\sqrt {\left| {EG - {F^2}} \right|} }}\left( {{\cal{P}}\left( {\gamma'(u)} \right) \wedge {\cal{P}}\left( {\gamma(u)} \right)} \right) =  - {\cal{P}}\left( {{N_\gamma}(u)} \right) =  - {N_\gamma}(u),
\end{eqnarray}
where $ {N_\gamma}(u) = \gamma(u) \wedge \gamma'(u) $ is a spacelike unit vector field tangent to $ \mathbb{S}_1^2 \subset {T_p}\mathbb{S}_1^3 $. 

Let ${\cal S}:\mathfrak{X}(M) \to \mathfrak{X}(M)$ be the Weingarten map of $ M $, and  $ {\kappa_\gamma} = \det (\gamma,\gamma',\gamma'',p) $ be the geodesic curvature of the timelike unit speed curve $ \gamma $ with respect to Sabban frame (curve-surface frame) $ \left\{ {\gamma,{T_\gamma} = \gamma',{N_\gamma} = \gamma \wedge {T_\gamma}} \right\} $ in $ \mathbb{S}_1^2 \subset {T_p}\mathbb{S}_1^3 $ such that 
\begin{eqnarray}\label{eq3_05}
\left\{ 
\begin{array}{l}
{\overline \nabla_{{T_\gamma}}}{T_\gamma}={\kappa_\gamma}{N_\gamma}\\
{\overline \nabla_{{T_\gamma}}}{N_\gamma}={\kappa_\gamma}{T_\gamma}
\end{array}
\right..
\end{eqnarray}
Then, we obtain following equations 
\begin{eqnarray*}
	{\cal S}({\Phi_u}) &=&  - {\overline \nabla  _{{\Phi_u}}}\xi = {\overline \nabla_{{T_\gamma}(u)}}{N_\gamma}(u) = \frac{{\kappa_{\gamma}(u)}}{{\sin (v)}}{\Phi_u}, \\
	{\cal S}({\Phi_v}) &=&  - {\overline \nabla_{{\Phi_v}}}\xi =  - \nabla _{{\Phi_v}}^0{\xi} = 0.
\end{eqnarray*}
by using (\ref{eq2_02e}), (\ref{eq3_02}), (\ref{eq3_04}) and (\ref{eq3_05}). Thus, the Gaussian curvature $ K $ and the mean curvature $ H $ of $ M $ is 
\begin{eqnarray*}
	K = {K_e} + \det ({\cal S}) = 1,\quad H = \frac{1}{2}tr({\cal S}) = \frac{{{\kappa_\gamma}(u)}}{{2\sin (v)}}
\end{eqnarray*}
where $ K_e $ is extrinsic Gaussian curvature of $ M $. Moreover, we obtain the following equations
\begin{eqnarray}
\overline{\nabla}_{\Phi_u}{\Phi_u}&=& \sin (v)\cos (v){\Phi_v} - {\kappa_\gamma}(u)\sin (v)\xi,{\label{eq3_07}} \\ 
\overline{\nabla}_{\Phi_u}{\Phi_v}&=&\overline{\nabla}_{\Phi_v}{\Phi_u}=\cot(v){\Phi_u},{\label{eq3_08}} \\ 
\overline{\nabla}_{\Phi_v}{\Phi_v}&=&0,{\label{eq3_09}}
\end{eqnarray}
by using (\ref{eq2_02c}), (\ref{eq3_02}) and (\ref{eq3_03}).

Now, we give the relationship between timelike rectifying curves and timelike conical surface in $ \mathbb{S}_1^3 $. 

\begin{theorem}\label{teo3_01}
	Let $ \alpha = \alpha(s) $ be a timelike unit speed curve in $ \mathbb{S}_1^3 $, and $ p \in\mathbb{S}_1^3$ be a fixed point such that $ p \notin {\rm Im}(\alpha) $. Then, $ \alpha $ is a
	timelike rectifying curve if and only if $ \alpha $ is a geodesic of the timelike conical surface $ M $ with apex $ p $ and timelike director curve $ \gamma $ which is a timelike unit speed  curve in 2-dimensional pseudo-sphere $ \mathbb{S}_1^2 \subset T_{p}{\mathbb{S}_1^3} $.
\end{theorem}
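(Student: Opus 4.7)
The plan is to convert both the rectifying property and the geodesic-of-cone property to the same orthogonality statement for the principal normal, namely $N_\alpha \parallel \xi$. The geometric bridge for both directions is the representation $\alpha(s) = \cos(v(s))p + \sin(v(s))\gamma(u(s))$, where $\gamma(u(s)) \in \mathbb{S}_1^2 \subset T_p\mathbb{S}_1^3$ is the unit initial velocity at $p$ of the geodesic $\beta_s$ joining $p$ to $\alpha(s)$ and $t = v(s)$ is the geodesic time at which $\beta_s$ reaches $\alpha(s)$. Along this representation $T_\alpha = v'\Phi_v + u'\Phi_u$ splits the tangent into the transverse and generatrix directions, and the two frames $\{p,\gamma,\gamma',N_\gamma\}$ and $\{\alpha,T_\alpha,N_\alpha,B_\alpha\}$ are tied together through (\ref{eq3_02}), (\ref{eq3_03}), and (\ref{eq3_04}).

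For the forward direction, assuming $\alpha$ rectifying, the first step is to argue that the assignment $s \mapsto \gamma_s$ determines, after an arc-length reparametrization $u = u(s)$, a regular timelike unit speed curve $\gamma$ on $\mathbb{S}_1^2$, so that the cone $M$ in (\ref{eq3_01}) is well-defined and timelike along $\alpha$. This is forced by $T_\alpha$ being timelike together with (\ref{eq3_02a}): the relation $(u')^2\sin^2(v) - (v')^2 = 1$ simultaneously rules out $\sin(v)=0$ and $u'=0$ and forces $\gamma'$ timelike. The second step is to translate rectifying into frame data. The identity $\langle \beta_s'(t), N_\alpha\rangle = 0$ for all $t$ expands, via $\beta_s'(t) = -\sin(t)p + \cos(t)\gamma(u(s))$, to $\langle p, N_\alpha\rangle = 0$ and $\langle \gamma(u(s)), N_\alpha\rangle = 0$; using the automatic identity $\langle \alpha, N_\alpha\rangle = 0$ each of these is equivalent to the single condition $\langle \Phi_v, N_\alpha\rangle = 0$. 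Combining with $\langle T_\alpha, N_\alpha\rangle = 0$ and $u' \neq 0$ yields $\langle \Phi_u, N_\alpha\rangle = 0$ as well, so $N_\alpha$ is normal to $M$ and therefore $N_\alpha \parallel \xi$. An application of Remark \ref{rmrk2_02} to (\ref{eq2_08}) then gives $\nabla_{T_\alpha}T_\alpha = 0$, i.e., $\alpha$ is a geodesic of $M$.

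The reverse direction is essentially the same computation run backwards. If $\alpha$ is a geodesic of the cone $M$, Remark \ref{rmrk2_02} forces $\overline{\nabla}_{T_\alpha}T_\alpha$ to be a multiple of $\xi$, so (\ref{eq2_08}) and (\ref{eq3_04}) give $N_\alpha = \pm N_\gamma(u(s))$. Since $N_\gamma = p \times \gamma \times \gamma'$ is pseudo-orthogonal to both $p$ and $\gamma$, the inner products $\langle p, N_\alpha\rangle$ and $\langle \gamma(u(s)), N_\alpha\rangle$ vanish, and the expansion of $\beta_s'(t)$ then yields $\langle \beta_s'(t), N_\alpha\rangle = 0$ for every $t$, which is precisely the rectifying condition. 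The main obstacle I anticipate is not analytic but parametric: justifying in the forward construction that $\gamma$ is a regular timelike curve in $\mathbb{S}_1^2$ and that the coordinates $(u(s), v(s))$ avoid the degenerate loci $\sin(v)=0$ and $u'=0$ where the cone frame collapses. Once these are secured by the timelike character of $T_\alpha$ together with the standing hypotheses $\kappa_g > 0$, $\tau_g \neq 0$, and $\{\pm p\} \cap {\rm Im}(\alpha) = \emptyset$, the algebraic content of the theorem reduces to the orthogonality identification of $N_\alpha$ and $\xi$ sketched above.
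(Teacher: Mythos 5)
Your proposal is correct and follows essentially the same route as the paper: both directions are reduced to the single statement that $N_\alpha$ is normal to the cone $M$ (equivalently $N_\alpha\parallel\xi$), using the fact that $T_{\alpha(s)}M$ is spanned by the generatrix direction together with $T_\alpha$, and then Remark \ref{rmrk2_02} combined with $\overline{\nabla}_{T_\alpha}T_\alpha=\kappa_g N_\alpha$ closes the argument. Your treatment is somewhat more explicit than the paper's about the nondegeneracy of the coordinates $(u,v)$ and the regularity of $\gamma$, which the paper leaves implicit, but the underlying argument is the same.
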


\begin{proof}
	Let $ \alpha = \alpha(s) $ be a timelike unit speed rectifying curve in $ \mathbb{S}_1^3 $, and $ p \in\mathbb{S}_1^3$ be a fixed point such that $ p \notin {\rm Im}(\alpha) $. Then, the parametrization of $ \alpha $ can be given by  $ \alpha(s) = {\exp _p}(v(s)\gamma(u(s))) $ for some functions $ u(s) $ and $ v(s) $ such that a timelike unit speed curve $ \gamma=\gamma(u) $ in $ \mathbb{S}_1^2 \subset T_{p}{\mathbb{S}_1^3} $. By using (\ref{eq3_01}), $\alpha(s) = \Phi(u(s),v(s)) $ is a timelike curve on timelike conical surface $ M $ which is determined by the apex $ p $ and the directrix $ \gamma $. Hence,  the spacelike geodesic that passing through $ p $ and $ \alpha(s) $ is given by $ \beta_s(t)=\Phi(u(s),t) $, and also we get the rectifying condition
	\begin{eqnarray}\label{eq3_10}
	\left\langle {{T_{{\beta_s}}}(v(s)),{N_\alpha}(s)} \right\rangle  = 0,
	\end{eqnarray}
	where $ T_{\beta_s} $ is the spacelike unit tangent vector of $ \beta_s $. So, the timelike tangent plane at point $ \alpha(s) $ of $ M $ consists of tangent vectors to the generatrix including the point $ \alpha(s) $ of $ M $. Namely, $ T_{\alpha(s)}M =\{ T_{{\beta_s}}(v(s)), T_{\alpha}(s) \} $. Then, by using (\ref{eq3_10}, we obtain that $ N_{\alpha}(s) $ is orthogonal to $ M $. Thus, ${\overline \nabla _{{T_\alpha}}}{T_\alpha} $ is parallel unit normal vector field of $ M $ by using (\ref{eq2_08}), and so $ \alpha $ is a geodesic of $ M $ by Remark \ref{rmrk2_02}.
	
	On the other hand, let  $ \alpha $ be a timelike unit speed geodesic of the timelike conical surface $ M $ with apex $ p $, and its parametrization be given by $ \alpha(s) = \Phi(u(s),v(s)) $ with arc length parameter $ s $. Then the spacelike principal normal vector $ N_{\alpha}(s) $ of $ \alpha $ is orthogonal to timelike surface $ M $. From here, $ N_{\alpha}(s) $ is also orthogonal to $ \beta_s $, which is the spacelike unit speed generatrix that passing through  $ p=\beta_s(0) $ and $ \alpha(s)=\beta_{s}(v(s)) $. Thus, we get that the rectifying condition (\ref{eq3_10}), and so $ \alpha $ is a timelike rectifying curve.
\end{proof} 

Now, we give a characterization with respect to the ratio of geodesic torsion and geodesic curvature for timelike rectifying curves in $ \mathbb{S}_1^3 $.

Let $\alpha:I\to M\subset\mathbb{S}_{1}^3,\,\,\alpha(s) = \Phi(u(s),v(s)) $ be a timelike unit speed curve in a timelike conical surface $ M$ which is given by the parametrization (\ref{eq3_01}) such that some differentiable functions $ u=u(s) $ and $ v=v(s) $. By using (\ref{eq3_02}) and (\ref{eq3_03}), we obtain that 
\begin{eqnarray}\label{eq3_11}
- 1 = \left\langle {{T_\alpha},{T_\alpha}} \right\rangle  =  - {(u')^2}{\sin ^2}(v) + {(v')^2}.
\end{eqnarray}
Moreover, from (\ref{eq2_02c}), we write $ {\overline \nabla  _{{T_\alpha}}}{T_\alpha} = {\nabla ^0}_{{T_\alpha}}{T_\alpha} - \Phi $ for $ T_\alpha \in \mathfrak{X}(M)$. Thus, we have the following equation
\begin{eqnarray}\label{eq3_12}
{\overline \nabla _{{T_\alpha}}}{T_\alpha}=(u'' + 2{u' }{v' }\cot (v)) {\Phi_u} + ({v'' + {{({u' })}^2}\sin (v)\cos (v)}) {\Phi_v} - {({u' })^2}{\kappa_\gamma}(u) \sin(v)\xi,
\end{eqnarray}
by using (\ref{eq3_02a}), (\ref{eq3_07}), (\ref{eq3_08}) and (\ref{eq3_09}).

Let $ \alpha $ be a geodesic in timelike conical surface $ M $ (namely, a timelike rectifying curve in $ \mathbb{S}_1^3 $). Then, it is easily seen that spacelike principal normal $ N_\alpha $ of $ \alpha $ is parallel to spacelike unit normal vector field $ \xi $ of $ M $ from Theorem \ref{teo3_01}, and so we obtain the following differential equation system
\begin{eqnarray}
u'' + 2u'v'\cot (v) &=& 0, \label{eq3_13}\\
v'' + {(u')^2}v'\sin (v)\cos (v) &=& 0, \label{eq3_14}\\
- {(u')^2}{\kappa_\gamma}(u)\sin (v)&=& {\kappa_g} > 0. \label{eq3_15}
\end{eqnarray}
with respect to functions $ u(s) $ and $ v(s) $.

If we take $ f(s) =\cos(v(s)) $, then we get the following differential equation
\begin{eqnarray*}
	f''(s) - f(s) = 0,
\end{eqnarray*}
by using  (\ref{eq3_11}) and (\ref{eq3_14}). Therefore, the solution is given by
\begin{eqnarray}\label{eq3_16}
f(s)={\lambda_1 }\sinh(s+s_0) + {\lambda_2} \cosh(s+s_0),
\end{eqnarray}
and so
\begin{eqnarray}\label{eq3_17}
v(s)=\arccos ({\lambda_1} \sinh(s+s_0) + {\lambda_2} \cosh(s+s_0)),
\end{eqnarray}
where some constants $ {\lambda_1},{\lambda_2} $ and $ s_0 $. Since $ N_{\alpha}(s) $ is parallel to $ \xi(u(s),v(s)) $, In without loss of generality, we can write that  
\begin{eqnarray*}\label{eq3_18}
	{B_\alpha}(s) = {\lambda}(s){\Phi_u}(u(s),v(s)) + {\mu}(s){\Phi_v}(u(s),v(s)),
\end{eqnarray*}
where $ {\lambda} = \frac{{ < {B_\alpha},{\Phi_u} > }}{E} = \frac{{v'}}{{\sin (v)}} $ and $ {\mu} = \frac{{ < {B_\alpha},{\Phi_v} > }}{G} =  - u'\sin (v) $. After straightforward calculation, we get that
\begin{eqnarray*}
	\overline{\nabla}_{T_\alpha}{B_\alpha}={\lambda}^{\prime}\Phi_u+{\mu}^{\prime}\Phi_v+({\lambda}{u'})\overline{\nabla}_{\Phi_u}{\Phi_u}+({\lambda}{v'}+{\mu}{u'})\overline{\nabla}_{\Phi_u}{\Phi_v}.
\end{eqnarray*}
By using (\ref{eq2_08}), (\ref{eq3_07}) and (\ref{eq3_08}), we obtain that the geodesic torsion which is given by
\begin{eqnarray*}
	\tau_g={u'}{v'} \kappa_\gamma{(u)}.
\end{eqnarray*}
From the last equation and (\ref{eq3_15}), we get that 
\begin{eqnarray}\label{eq3_19}
\frac{{{\tau_g}}}{{{\kappa_g}}} = \frac{{v'}}{{u'\sin(v)}}.
\end{eqnarray}
Now, by using (\ref{eq3_13}), we obtain 
\begin{eqnarray*}
	u''{\sin ^2}(v) + 2u'v'\sin (v)\cos (v) = 0,
\end{eqnarray*}
and after changing of variable, we have
\begin{eqnarray}\label{eq3_20}
u'{\sin ^2}(v) = {c},
\end{eqnarray}
for a nonzero constant $ {c} $.
If we consider together the equations (\ref{eq3_11}), (\ref{eq3_16}), (\ref{eq3_17}) and (\ref{eq3_20}), then we get the relation
\begin{eqnarray}\label{eq3_21}
{{c}^2} = {{\lambda_1}^2} - {{\lambda_2}^2} +1,
\end{eqnarray}
for the constants $ {\lambda_1},\,{\lambda_2} $ and $ {c} $. Finally, after required calculations by using (\ref{eq3_17}), (\ref{eq3_19}) and (\ref{eq3_20}), we obtain that 
\begin{eqnarray}\label{eq3_22}
\frac{{{\tau_g}}}{{{\kappa_g}}}(s) = {{\mu_1}}\sinh (s + {s_0}) + {{\mu_2}}\cosh (s + {s_0}),
\end{eqnarray}
for some constants $ {\mu_1}=\frac{-{\lambda_2}}{{c}}$, ${\mu_2}=\frac{-{\lambda_1}}{{c}}$  and $ s_0 $ such that $ {{\mu_2}}^2 - {{\mu_1}}^2 < 1 $.

On the other hand, let $ \alpha = \alpha(s) $ be a timelike unit speed  curve in $ \mathbb{S}_1^3 $ whose geodesic curvatures  satisfying the equation (\ref{eq3_22}) for some constants $ {\mu_1} $ and ${\mu_2}$ such that $ {{\mu_2}}^2 - {{\mu_1}}^2 < 1 $. Let $ {c} $ be a nonzero constant such that
\begin{eqnarray*}
	{{c}^2} = \frac{1}{{{{\mu_1}}^2 - {{\mu_2}}^2 + 1}},
\end{eqnarray*}
and define two constants $ {\lambda_1} =  - {{\mu_2}}{c} $ and $ {\lambda_2} =  - {{\mu_1}}{c} $. Let the function $ v(s) $ be defined by (\ref{eq3_17}) and the function $ u(s) $ be a solution of (\ref{eq3_20}), which is given by
\begin{eqnarray*}
	u(s) = {\tanh ^{ - 1}}\left( {\frac{{{\lambda_1}{\lambda_2}}}{{c}} + \left( {\frac{{1 + {{\lambda_1}^2}}}{{c}}} \right)\tanh (s + {s_0})} \right).
\end{eqnarray*}
Now, let $ M $ be the timelike conical surface with apex  
$ p \notin {\rm Im}(\alpha) \subset \mathbb{S}_1^3$ and timelike director curve $ \gamma$ which is a timelike unit speed curve in $ \mathbb{S}_1^2 \subset T_{p}{\mathbb{S}_1^3} $ such that geodesic curvature  $ \kappa_\gamma $ of $ \gamma $ is given by (\ref{eq3_15}). Then, we consider a timelike unit speed curve $\widetilde{\alpha} (s) = {\exp _p}(v(s)\gamma(u(s)))=\Phi(u(s),v(s)) $ in $ M$, which is given by the parametrization (\ref{eq3_01}) for some differentiable functions $ u=u(s) $ and $ v=v(s) $. It is easily seen that $\widetilde{\alpha}$ is a geodesic of $ M $, whose geodesic curvatures are same with geodesic curvatures of the curve $ \alpha $, and so  $ \alpha $ is congruent to a geodesic in a timelike conical surface. Thus, we show that the equation (\ref{eq3_22}) determines the timelike curves in $ \mathbb{S}_1^3 $ that are geodesics in a timelike conical surface whose parametrization is $ \Phi(u(s),v(s)) $. As a result, we say that $ \alpha $ is congruent to a timelike rectifying curve in $ \mathbb{S}_1^3 $ by Theorem \ref{teo3_01}.

Consequently, we obtain the following characterization for timelike rectifying curves in $ \mathbb{S}_1^3 $ with respect to the ratio of the geodesic curvatures. 

\begin{theorem}\label{teo3_02}
	Let $ \alpha = \alpha(s) $ be a timelike unit speed curve in $ \mathbb{S}_1^3 $ with geodesic curvature $ \kappa_g $ and geodesic torsion $ \tau_g $. Then $ \alpha $ is congruent to a timelike rectifying curve iff the ratio of geodesic torsion and geodesic curvature of the timelike curve is given by
	\begin{eqnarray*}
		\frac{{{\tau_g}}}{{{\kappa_g}}}(s) = {{\mu_1}}\sinh (s + {s_0}) + {{\mu_2}}\cosh (s + {s_0}),
	\end{eqnarray*}
	for some constants $ {\mu_1},{\mu_2} $ and $ s_0 $ such that $ {{\mu_2}}^2 - {{\mu_1}}^2 < 1 $.	
\end{theorem}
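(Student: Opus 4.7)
The plan is to use Theorem \ref{teo3_01} as the pivot: a timelike rectifying curve is exactly a geodesic of some timelike conical surface, so I would parametrize $\alpha(s)=\Phi(u(s),v(s))$ on such a surface $M$ with apex $p\notin{\rm Im}(\alpha)$ and timelike director $\gamma\subset\mathbb{S}_1^2\subset T_p\mathbb{S}_1^3$. Differentiating gives $T_\alpha=u'\Phi_u+v'\Phi_v$; the timelike unit speed condition combined with (\ref{eq3_02a}) produces (\ref{eq3_11}), while expanding $\overline\nabla_{T_\alpha}T_\alpha$ in the frame $\{\Phi_u,\Phi_v,\xi\}$ via the Gauss-type identities (\ref{eq3_07})--(\ref{eq3_09}) and imposing the geodesic condition (Remark \ref{rmrk2_02}) yields the system (\ref{eq3_13})--(\ref{eq3_15}) together with the identification of $\kappa_g$ from the $\xi$-component.

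The key analytic trick is the substitution $f(s)=\cos v(s)$: combining (\ref{eq3_11}) and (\ref{eq3_14}), I expect $f''-f=0$ to drop out, giving the explicit form $f(s)=\lambda_1\sinh(s+s_0)+\lambda_2\cosh(s+s_0)$. Separately, (\ref{eq3_13}) integrates to the first integral $u'\sin^2 v=c$ for a nonzero constant $c$, and plugging back into (\ref{eq3_11}) forces the algebraic compatibility $c^2=\lambda_1^2-\lambda_2^2+1$. To pull out $\tau_g/\kappa_g$, I would exploit the fact that $N_\alpha$ is normal to $M$, so $B_\alpha\in T_{\alpha(s)}M$; writing $B_\alpha=\lambda\Phi_u+\mu\Phi_v$ with $\lambda=v'/\sin v$ and $\mu=-u'\sin v$, differentiating via (\ref{eq3_07})--(\ref{eq3_08}) and extracting the $N_\alpha$-coefficient through $\overline\nabla_{T_\alpha}B_\alpha=-\tau_g N_\alpha$ delivers $\tau_g=u'v'\kappa_\gamma(u)$. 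Dividing by $\kappa_g$ from (\ref{eq3_15}) then gives $\tau_g/\kappa_g=v'/(u'\sin v)=-f'/c$; substituting $f$ yields the sinh-cosh formula with $\mu_1=-\lambda_2/c$, $\mu_2=-\lambda_1/c$, and the algebraic identity $\mu_2^2-\mu_1^2=1-1/c^2$ pins down the bound $\mu_2^2-\mu_1^2<1$.

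For the converse, given constants $\mu_1,\mu_2,s_0$ with $\mu_2^2-\mu_1^2<1$, I would reverse-engineer: set $c^2=1/(1-(\mu_2^2-\mu_1^2))$, $\lambda_1=-\mu_2 c$, $\lambda_2=-\mu_1 c$, recover $v(s)$ from (\ref{eq3_17}) and $u(s)$ by integrating (\ref{eq3_20}), pick any apex $p\in\mathbb{S}_1^3\setminus{\rm Im}(\alpha)$, and invoke the fundamental theorem for curves in $\mathbb{S}_1^2$ to realize a timelike unit speed director $\gamma$ whose geodesic curvature $\kappa_\gamma(u)$ is the function prescribed by (\ref{eq3_15}). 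The curve $\widetilde\alpha(s)=\Phi(u(s),v(s))$ on the resulting conical surface solves the geodesic ODEs by construction, so by Theorem \ref{teo3_01} it is a timelike rectifying curve with the same $\kappa_g,\tau_g$ as $\alpha$, and the fundamental uniqueness theorem for timelike curves in $\mathbb{S}_1^3$ then gives $\alpha\cong\widetilde\alpha$.

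The main obstacle I anticipate is the clean extraction of $\tau_g$ from the binormal decomposition in surface coordinates: the computation requires careful bookkeeping with (\ref{eq3_07})--(\ref{eq3_08}), and one must verify that the $\Phi_u,\Phi_v$-components of $\overline\nabla_{T_\alpha}B_\alpha$ cancel (using (\ref{eq3_13})) so that the residual lies purely along $N_\alpha$. A secondary subtlety in the converse is ensuring that $\kappa_\gamma$ is well-defined as a function of the parameter $u$ rather than merely of $s$; this is legitimate because $u'=c/\sin^2 v$ is nowhere zero, so $s\mapsto u(s)$ is a local diffeomorphism by the inverse function theorem.
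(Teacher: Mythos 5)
Your proposal follows essentially the same route as the paper's own argument: reduction to geodesics of a timelike conical surface via Theorem \ref{teo3_01}, the system (\ref{eq3_13})--(\ref{eq3_15}), the substitution $f=\cos v$ giving $f''-f=0$, the first integral $u'\sin^2 v=c$ with $c^2=\lambda_1^2-\lambda_2^2+1$, the decomposition $B_\alpha=\lambda\Phi_u+\mu\Phi_v$ yielding $\tau_g=u'v'\kappa_\gamma(u)$, and the same reverse-engineering of $c,\lambda_1,\lambda_2,v,u,\kappa_\gamma$ for the converse. The steps, constants, and the bound $\mu_2^2-\mu_1^2=1-1/c^2<1$ all match the paper, so the proposal is correct and not a genuinely different proof.
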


Now, we give some characterizations for timelike rectifying curves in $ \mathbb{S}_1^3 $.

\begin{theorem}\label{teo3_03}
	Let $ \alpha = \alpha(s) $ be a timelike unit speed curve in $ \mathbb{S}_1^3 $ and $ p $ be the fixed point in $ \mathbb{S}_1^3 $ such that $ p \notin {\rm Im}(\alpha) $. Then the following statements are equivalent:	
	\begin{enumerate}[label={\upshape(\roman*)}, align=left, widest=i, leftmargin=*]
		\item $ \alpha $ is a timelike rectifying curve.
		\item $ p^ \bot $ is the component of $ p $ which is orthogonal to $ T_{\alpha} $ in  $ \mathbb{S}_1^3 $ such that
		\begin{eqnarray}
		< p,{T_\alpha}(s) > &=&{{n_1}}\sinh \left( {s + {s_0}} \right) + {{n_2}}\cosh \left( {s + {s_0}} \right),\label{eq3_ek02}\\
		|{p^ \bot }|^2 &=& {n^2},\label{eq3_ek03}	
		\end{eqnarray}
		for some constants $ {n_1},{n_2},n $ and $ s_0 $, with $ {{n_1}}^2 - {{n_2}}^2 + {n^2} = 1 $. 
		\item  $ < p,{N_\alpha}(s) > = 0. $
		\item  $ < p,{B_\alpha}(s) > = {\sigma} $ for some constant ${\sigma}$.
		\item $< p,{\alpha}(s) >  = {{m_1}}\sinh \left( {s + {s_0}} \right) + {{m_2}}\cosh \left( {s + {s_0}} \right)$ for some constants $ {m_1},{m_2} $ and $ s_0 $ such that \mbox{${m_2}^2 - {m_1}^2  \le 1 $}.
		\item Distance function in $ \mathbb{S}_1^3 $ between $ p $ and $ \alpha(s) $, $ \eta(s)=d(p,\alpha(s))$, satisfies
		\begin{eqnarray*}
			\cos(\eta(s))={{k_1}}\sinh \left( {s + {s_0}} \right) + {{k_2}}\cosh \left( {s + {s_0}} \right),
		\end{eqnarray*}
		for some constants $ {k_1},{k_2} $ and $ s_0 $ such that $ {{k_2}}^2 - {{k_1}}^2  \le 1 $.
	\end{enumerate}
\end{theorem}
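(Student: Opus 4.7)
The plan is to use condition (iii), namely $\langle p,N_\alpha(s)\rangle=0$, as the hub and to show that it is equivalent to each of the other five conditions, since (iii) is the most direct reformulation of the rectifying condition of Definition \ref{def3_01}. I would first establish (i) $\iff$ (iii): the geodesic $\beta_s$ joining $p$ and $\alpha(s)$ always lies in the plane $Sp\{p,\alpha(s)\}\subset\mathbb{R}_1^4$, so its velocity at $\alpha(s)$ is a linear combination of $p$ and $\alpha(s)$. Because $\langle\alpha(s),N_\alpha(s)\rangle=0$ (as $N_\alpha$ is tangent to $\mathbb{S}_1^3$), the rectifying condition reduces, after a short computation using one of the three parametrizations of the connecting geodesic from the preliminary, to a nonzero multiple of $\langle p,N_\alpha\rangle$ vanishing, which is exactly (iii).

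Next, the equivalences (iii) $\iff$ (iv) and (iii) $\iff$ (v) follow by differentiating the relevant quantities along $\alpha$ and invoking the Frenet equations (\ref{eq2_06}). Differentiating $\langle p,B_\alpha\rangle$ with the aid of $\nabla^0_{T_\alpha}B_\alpha=-\tau_g N_\alpha$ yields $\frac{d}{ds}\langle p,B_\alpha\rangle=-\tau_g\langle p,N_\alpha\rangle$, and since $\tau_g\neq 0$ by Remark \ref{rmrk_04}, one obtains (iii) $\iff$ (iv) with $\sigma=\langle p,B_\alpha\rangle$. Setting $f(s)=\langle p,\alpha(s)\rangle$ and using $\nabla^0_{T_\alpha}T_\alpha=\alpha+\kappa_g N_\alpha$ gives $f''(s)-f(s)=\kappa_g\langle p,N_\alpha(s)\rangle$; since $\kappa_g>0$, this shows (iii) $\iff$ the ODE $f''=f$, whose general solution is exactly the form in (v).

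For (v) $\iff$ (vi) I would simply note that along the connecting geodesic one has $\cos(\eta(s))=\langle p,\alpha(s)\rangle=f(s)$ by the parametrization from the preliminary, so (vi) is (v) rewritten with $k_i=m_i$. For (ii) $\iff$ (iii), observe that $\langle p,T_\alpha\rangle=f'(s)$, so the first identity in (ii) follows from (v) with $n_1=m_2,\,n_2=m_1$; and interpreting $p^{\perp}$ as the component of $p$ in the normal plane $Sp\{N_\alpha,B_\alpha\}$, we have $|p^{\perp}|^2=\langle p,N_\alpha\rangle^2+\langle p,B_\alpha\rangle^2=\sigma^2$ once (iii) and (iv) hold, giving $n^2=\sigma^2$. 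The numerical constraints follow by expanding $p$ in the pseudo-orthonormal frame $\{\alpha,T_\alpha,N_\alpha,B_\alpha\}$ (with $T_\alpha$ timelike, contributing the minus sign) and applying $\langle p,p\rangle=1$: one gets $f^2-(f')^2+\langle p,N_\alpha\rangle^2+\langle p,B_\alpha\rangle^2=1$, and a direct computation shows $f^2-(f')^2=m_2^2-m_1^2$, so that $m_2^2-m_1^2+\sigma^2=1$ and hence $m_2^2-m_1^2\leq 1$, $n_1^2-n_2^2+n^2=1$, and likewise $k_2^2-k_1^2\leq 1$.

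The main obstacle I expect is not algebraic but notational: careful bookkeeping of signs in the Lorentzian expansion of $p$ in the frame, because $T_\alpha$ is timelike and hence contributes $-\langle p,T_\alpha\rangle T_\alpha$ rather than $+\langle p,T_\alpha\rangle T_\alpha$, and the precise identification of $p^{\perp}$ with the projection onto $Sp\{N_\alpha,B_\alpha\}$ rather than with the ambient orthogonal complement of $T_\alpha$ in $\mathbb{R}_1^4$. A secondary subtlety is the case analysis for the connecting geodesic (spacelike circle, timelike pseudo-circle, or null line) from the preliminary, but in each case the identity $\langle\beta_s'(\eta(s)),N_\alpha(s)\rangle=0\iff\langle p,N_\alpha(s)\rangle=0$ follows by the same projection argument, and the solution $m_1\sinh(s+s_0)+m_2\cosh(s+s_0)$ of $f''=f$ that governs (v) is insensitive to the signature.
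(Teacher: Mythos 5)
Your overall architecture (hub at (iii), differentiation against the Frenet equations \eqref{eq2_06}, and the Lorentzian expansion of $p$ in the frame $\{\alpha,T_\alpha,N_\alpha,B_\alpha\}$) matches the paper's proof in substance; your direct route (i) $\Leftrightarrow$ (iii) via the connecting geodesic is even a little cleaner than the paper's detour (i) $\Rightarrow$ (v) $\Rightarrow$ (iii) through the conical-surface equations, and your computations for (iii) $\Leftrightarrow$ (iv), (iii) $\Leftrightarrow$ (v), (v) $\Leftrightarrow$ (vi) and the constraint $m_2^2-m_1^2+\sigma^2=1$ are all correct.

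There is, however, one genuine gap: you never prove that (ii) implies any of the other statements. Everything you write about (ii) runs in the direction (iii)$+$(iv)$+$(v) $\Rightarrow$ (ii) — you derive \eqref{eq3_ek02} by differentiating (v), and \eqref{eq3_ek03} from (iii) and (iv). For the converse you must start from \eqref{eq3_ek02} alone, integrate to get $\langle p,\alpha(s)\rangle = n_1\cosh(s+s_0)+n_2\sinh(s+s_0)+c_0$, and then show $c_0=0$; this is exactly where the second hypothesis \eqref{eq3_ek03} and the numeric constraint $n_1^2-n_2^2+n^2=1$ are actually needed. Concretely, $\langle p,p\rangle=1$ gives $\langle p,\alpha\rangle^2-\langle p,T_\alpha\rangle^2=1-|p^\perp|^2=1-n^2=n_1^2-n_2^2$, while substituting the integrated expression yields $\langle p,\alpha\rangle^2-\langle p,T_\alpha\rangle^2=n_1^2-n_2^2+2c_0\bigl(n_1\cosh(s+s_0)+n_2\sinh(s+s_0)\bigr)+c_0^2$; comparing forces $c_0=0$ for all $s$, which recovers (v). Without this step the chain of equivalences is broken at (ii), so you should add it (it is the content of the paper's equations \eqref{eq3_24}--\eqref{eq3_25}).
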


\begin{proof}
	Firstly, let the statement (i) is valid. From Teorem \ref{teo3_01}, we say that $\alpha:I\to M\subset\mathbb{S}_{1}^3,\,\,\alpha(s) = \Phi(u(s),v(s)) $ be a geodesic in timelike conical surface $ M$ which is given by the parametrization (\ref{eq3_01}) such that some differentiable functions $ u=u(s) $ and $ v=v(s) $ which are satisfying (\ref{eq3_13})-(\ref{eq3_15}). Then, we obtain $ < p,{\alpha}(s) > =\cos(v(s)) $ and we get statement (v) by using (\ref{eq3_17}) and (\ref{eq3_21}).
	
	Now, let the statement (v) be valid. By using (\ref{eq2_06}) and take into consideration hypothesis, then we obtain 
	\begin{eqnarray*}
		{\kappa_g}(s)\left\langle {p,{N_\alpha}(s)} \right\rangle  + \left\langle {p,\alpha(s)} \right\rangle = \left\langle {p,\nabla _{{T_\alpha}(s)}^0{T_\alpha}(s)} \right\rangle = {{m_1}}\sinh (s + {s_0}) + {{m_2}}\cosh (s + {s_0}) = \left\langle {p,\alpha(s)} \right\rangle.
	\end{eqnarray*}
	From this equation, it must be $\left\langle{p,{N_\alpha}(s)} \right\rangle  = 0$, since $ \kappa_{g} \neq 0$. Thus, we obtain the statement (iii).
	
	Now, let the statement (iii) be valid. Let $ \beta_{s}(t) $ be the spacelike geodesic that passing through $ p=\beta_s(0) $ and $ \alpha(s)=\beta_{s}(v(s)) $ for some function $ v(s) $. From definition of spacelike geodesic in $ \mathbb{S}_{1}^3 $, we can write
	\begin{eqnarray*}
		\beta_{s}'(t)=f(t)p+g(t)\alpha(s),
	\end{eqnarray*}
	for some differentiable functions $ f(t) $ and $ g(t) $ which satisfy the condition $ f(t)^2+g(t)^2=1 $. Taking into account that hypothesis and $\left\langle{\alpha,{N_\alpha}(s)} \right\rangle  = 0$ by curve-hypersurface frame of $ \alpha $, then we obtain the rectifying condition
	\begin{eqnarray*}
		\left\langle {\beta_{s}'(v(s)),{N_\alpha}(s)} \right\rangle = 0.
	\end{eqnarray*}
	Namely, the statement (i) is obtained. Thus, we say that statements (i), (iii) and (v) are equivalent.
	
	Now, let us show that the statements (iii) and (iv) are equivalent. We suppose that the statement (iii) is valid. After using (\ref{eq2_06}) and hypothesis, we get 
	\begin{eqnarray}\label{eq3_23}
	\frac{d}{{ds}}\left\langle {p,{B_\alpha}(s)} \right\rangle {\rm{ }} = - {\tau_g}(s)\left\langle {p,{N_\alpha}(s)} \right\rangle  = 0.
	\end{eqnarray}
	then the statement (iv) is obtained. Conversely, let the statement (iv) is valid. if we take into consideration that hypothesis and $ \tau_g \neq0$, it easily seen that $ \left\langle{p,{N_\alpha}(s)} \right\rangle  = 0$ by using (\ref{eq3_23}). Hence, we see that $ ({\rm{iii}}) \Leftrightarrow ({\rm{iv}}) $.
	
	Now, let us show that the statements (v) and (vi) are equivalent. Since the position vector of $ \alpha(s) $ and the point $ p \in \mathbb{S}_1^3 $ are spacelike, in without lost of generality, we may write 
	\begin{eqnarray*}
		\left\langle {p,\alpha(s)} \right\rangle  = \cos (\eta(s)),
	\end{eqnarray*}
	for some function $ \eta(s) $. Thus, it is easily seen that $ ({\rm{v}}) \Leftrightarrow ({\rm{vi}}) $.  
	
	Finally, let us show that the statements (i) and (ii) are equivalent.
	Now, let the statement (i) be valid. In this case, we say that the statements (iii), (iv) and (v) are hold from the previous results. If we take derivative in (v), we get
	\begin{eqnarray*}
		\left\langle {p,{T_\alpha}(s)} \right\rangle  = {{m_1}}\cosh (s + {s_0}) + {{m_2}}\sinh (s + {s_0}),
	\end{eqnarray*}
	and so the equation (\ref{eq3_ek02}) is obtained. Moreover, we have
	\begin{eqnarray*}
		|p^{\bot}|^{2}&=&\left\langle p,N_\alpha \right\rangle^{2}	+\left\langle p,B_\alpha \right\rangle^{2}=\sigma^{2}	
	\end{eqnarray*}
	where $ {p^ \bot } \in Sp\left\{ {{N_\alpha},{B_\alpha}} \right\} $ by using the statements (iii) and (iv). Also, taking into account that $ T_\alpha $ is timelike, it is easily seen that
	\begin{eqnarray*}
		1 = \left\langle {p,p} \right\rangle {\rm{ }} = {\left\langle {p,\alpha} \right\rangle ^2} - {\left\langle {p,{T_\alpha}} \right\rangle ^2} + {\left\langle {p,{B_\alpha}} \right\rangle ^2} = {m_2}^2 - {m_1}^2 + {{\sigma}^2},
	\end{eqnarray*}
	for some constants $m_1=n_2,\,m_2=n_1$ and ${\sigma}=n$. Thus the equation (\ref{eq3_ek03}) is obtained. Namely, we see that $ ({\rm{i}}) \Rightarrow ({\rm{ii}}) $. Conversely, let the statement (ii) is valid. After integrating the equation (\ref{eq3_ek02}), we have
	\begin{eqnarray}\label{eq3_24}
	< p,{\alpha}(s) >  = {{n_1}}\cosh \left( {s + {s_0}} \right) + {{n_2}}\sinh \left( {s + {s_0}} \right) + c_0,
	\end{eqnarray}
	for some constant $ c_0 $. Moreover, we have
	\begin{eqnarray}\label{eq3_25}
	{{n_1}}^2 - {{n_2}}^2 = {\left\langle {p,\alpha} \right\rangle ^2} - {\left\langle {p,{T_\alpha}} \right\rangle ^2} = 1 - {n^2}.
	\end{eqnarray}
	by using the equation (\ref{eq3_ek03}). By considering together  the equations (\ref{eq3_ek02}), (\ref{eq3_24}) and (\ref{eq3_25}), then it is easily seen that $ c_0 =0$. Hence, the statement (v) is obtained and so $ ({\rm{v}}) \Rightarrow ({\rm{i}}) $. Consequently, we show that $ ({\rm{i}}) \Leftrightarrow ({\rm{ii}}) $.
\end{proof}

Now, we give the following theorem which characterizes all timelike rectifying curve in  $ \mathbb{S}_1^3 $.

\begin{theorem}\label{teo3_04}
	Let $ \alpha $ be a timelike non-planar curve in $ \mathbb{S}_1^3 $. Then, $ \alpha $ is a timelike rectifying curve
	if and only if, up to reparametrization, it is given by
	\begin{eqnarray}\label{eq3_26}
	\alpha(t) = \exp_p(\eta(t){\gamma(t)}) = \cos(\eta(t)) p+\sin(\eta(t)) \gamma(t),
	\end{eqnarray}
	where $ p $ is the fixed point in $ \mathbb{S}_1^3 $ such that $ p \notin {\rm Im}(\alpha) $, $ \gamma=\gamma(t) $ is a timelike unit speed curve in $ \mathbb{S}_1^2 \subset {T_p}\mathbb{S}_1^3 $, and $ \eta(t)=\arctan(a\,{\mathop{\rm sech}\nolimits} (t + t_0)) $ for some constants $ a\neq 0$ and $ t_0 $. 
\end{theorem}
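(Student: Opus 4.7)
The plan is to leverage Theorem \ref{teo3_01}, which identifies timelike rectifying curves with geodesics of timelike conical surfaces, together with the explicit parametric description derived in the proof of Theorem \ref{teo3_02}, and thereby reduce both implications to direct computations.

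For the forward direction, let $\alpha$ be a timelike unit speed rectifying curve in $\mathbb{S}_1^3$. By Theorem \ref{teo3_01}, there is a timelike conical surface $M$ with apex $p$ and directrix $\gamma$, a timelike unit speed curve in $\mathbb{S}_1^2\subset T_p\mathbb{S}_1^3$, such that $\alpha(s)=\cos(v(s))p+\sin(v(s))\gamma(u(s))$ is a geodesic of $M$. The derivation preceding Theorem \ref{teo3_02} furnishes the identities
\[
\cos v(s)=\lambda_1\sinh(s+s_0)+\lambda_2\cosh(s+s_0),\quad u'(s)\sin^2 v(s)=c,\quad c^2=\lambda_1^2-\lambda_2^2+1,
\]
for constants $\lambda_1,\lambda_2,s_0$ and $c\neq 0$. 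I would then reparametrize $\alpha$ by $t:=u(s)$: since $u'(s)=c/\sin^2 v(s)\neq 0$, this is a diffeomorphism, and $\gamma(u(s))=\gamma(t)$ remains timelike unit speed in $t$. Setting $\eta(t):=v(s(t))$ puts $\alpha(t)$ into the form (\ref{eq3_26}). To extract $\eta$ in closed form I would integrate $ds/dt=\sin^2 v(s)/c$ via the hyperbolic substitution $y=\tanh(s+s_0)$; the constraint $c^2=\lambda_1^2-\lambda_2^2+1$ reduces the integrand to $dy/(c^2-y^2)$, which integrates to an $\arctanh$. Inverting gives $\tanh(s+s_0)$ as an affine function of $\tanh(t+t_0)$ (the constant of integration is absorbed into $t_0$), and back-substituting into $\cos v$ and $\sin v$ followed by algebraic simplification yields $\tan\eta(t)=a\sech(t+t_0)$ for a nonzero constant $a$, i.e.\ $\eta(t)=\arctan(a\sech(t+t_0))$.

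For the converse direction, observe that (\ref{eq3_26}) is exactly the parametrization (\ref{eq3_01}) of a timelike conical surface with $u=t$ and $v=\eta(t)$; thus $\alpha$ automatically lies on the timelike conical surface $M$ with apex $p$ and directrix $\gamma$. By Theorem \ref{teo3_01}, it suffices to verify that $\alpha$ is a geodesic of $M$. I would expand $\overline{\nabla}_{\alpha'}\alpha'$ in the basis $\{\Phi_u,\Phi_v,\xi\}$ via (\ref{eq3_07})--(\ref{eq3_09}) with $u(t)=t$ (so $u'=1$, $u''=0$) and $v(t)=\eta(t)$, and check by direct differentiation that, for $\eta(t)=\arctan(a\sech(t+t_0))$, the $\Phi_u$ and $\Phi_v$ components are proportional to $\alpha'(t)$. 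Equivalently, the tangent-to-$M$ component of $\overline{\nabla}_{T_\alpha}T_\alpha$ vanishes after reparametrizing to arc length, so $\alpha$ is a geodesic of $M$ by Remark \ref{rmrk2_02}, and hence a timelike rectifying curve.

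The principal obstacle is the algebraic simplification in the forward direction: reducing the three-parameter family $(\lambda_1,\lambda_2,s_0)$ constrained by $c^2=\lambda_1^2-\lambda_2^2+1$ down to the two-parameter family $(a,t_0)$ via the hyperbolic substitution requires careful bookkeeping of how the integration constant distributes between the shift parameter $t_0$ and the amplitude $a$. Once this reduction is completed, both directions amount to routine---if somewhat tedious---verifications using the machinery developed in Section \ref{Sec:3}.
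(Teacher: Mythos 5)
Your route is genuinely different from the paper's. The paper never touches the geodesic system from the proof of Theorem \ref{teo3_02}: it writes $\alpha(t)=\exp_p(\eta(t)\gamma(t))$ directly, uses the equivalence (i)$\Leftrightarrow$(iii) of Theorem \ref{teo3_03} to reduce rectifiability to $\langle p,N_\alpha\rangle=0$, computes $\langle p,N_\alpha\rangle$ in terms of $\eta$ and $\|\alpha'\|$ alone, and obtains the single ODE $\sin(\eta)\eta''-2\cos(\eta)(\eta')^2+\cos(\eta)\sin^2(\eta)=0$, which the substitution $\eta=\arctan(h)$ turns into $hh''-2(h')^2+h^2=0$ with solution $h=a\sech(t+t_0)$. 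That argument is a chain of equivalences, so it handles both implications at once and avoids the change of parameter between the arc length $s$ of $\alpha$ and the arc length $t$ of $\gamma$ that your plan requires.

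The genuine problem is the step you dismiss as ``algebraic simplification'': it is not bookkeeping, and it fails on an open set of parameters. Carrying out your substitution $y=\tanh(s+s_0)$ gives $\tanh(t)=\tfrac{\lambda_1\lambda_2}{c}+\tfrac{1+\lambda_1^2}{c}\,y$ (consistent with the formula for $u(s)$ in the paper), and then a direct computation yields
\begin{equation*}
\tan^2\!\big(v(s)\big)\,\cosh^2(t+t_0)\;=\;\frac{c^2\cosh^2(t_0)\,\bigl(1+\tanh(t_0)\tanh(t)\bigr)^2}{(1+\lambda_1^2)\,(\lambda_1 y+\lambda_2)^2}.
\end{equation*}
This is constant in $y$ only if $1+\tanh(t_0)\tanh(t)$ is proportional to $\lambda_1 y+\lambda_2$, which forces $\tanh(t_0)=c\lambda_1/\lambda_2$; such a $t_0$ exists iff $|c\lambda_1|<|\lambda_2|$, which is equivalent to $\lambda_1^2<\lambda_2^2$. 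For $\lambda_1^2\ge\lambda_2^2$ (e.g.\ $\lambda_1=1$, $\lambda_2=0$, so $\cos v(s)=\sinh(s+s_0)$ on the interval where $|\sinh(s+s_0)|<1$) one still has a bona fide geodesic of the cone, hence a timelike rectifying curve by Theorem \ref{teo3_01}, but no choice of $a$ and $t_0$ gives $\tan\eta(t)=a\sech(t+t_0)$: the family $\eta=\arctan(a\sech(t+t_0))$ produces exactly those geodesics with $\cos v=\sqrt{\lambda_2^2-\lambda_1^2}\,\cosh(s+s_0')$, i.e.\ with $\langle p,\alpha\rangle$ positive and bounded away from zero. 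So your forward direction cannot be completed as written; it stalls precisely where you predicted only tedium. (The paper's proof sidesteps rather than resolves this: substituting $\eta=\arctan(h)$ already presupposes $\eta<\pi/2$, i.e.\ $\langle p,\alpha\rangle>0$, and silently discards the other solutions of the ODE. Your approach has the virtue of making the restriction visible, but as a proof of the stated equivalence it has a hole exactly there.) The converse direction of your plan is fine: checking that the tangential part of $\overline{\nabla}_{T_\alpha}T_\alpha$ vanishes for $u=t$, $v=\eta(t)$ via (\ref{eq3_07})--(\ref{eq3_09}) and invoking Remark \ref{rmrk2_02} and Theorem \ref{teo3_01} is a legitimate, if longer, alternative to the paper's equivalence chain.
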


\begin{proof}
	Let $ p \in \mathbb{S}_1^3 $ be a fixed point, $ \eta=\eta(t) $ be a positive function and $ \gamma=\gamma(t) $ be an unit speed timelike curve in $ \mathbb{S}_1^2 \subset {T_p}\mathbb{S}_1^3 $. If we take as $ \alpha(t) = \exp_p(\eta(t){\gamma(t)}) $ where $ p \notin {\rm Im}(\alpha) $, then the timelike unit tangent vector field ${T_\alpha}$ of $ \alpha $ is \begin{eqnarray}\label{eq3_28}
	{T_\alpha} = \frac{{\alpha'}}{\left\| {\alpha'} \right\|} = \frac{{ - \eta'\sin (\eta)}}{\left\| {\alpha'} \right\|}p + \frac{{\eta'\cos (\eta)}}{\left\| {\alpha'} \right\|}\gamma + \frac{{\sin (\eta)}}{\left\| {\alpha'} \right\|}\gamma',
	\end{eqnarray}
	where
	\begin{eqnarray*}
		\alpha'= - \eta'\sin (\eta)p + \eta'\cos (\eta)\gamma + \sin (\eta)\gamma',
	\end{eqnarray*}
	and
	\begin{eqnarray}\label{eq3_27}
	{\left\| {\alpha'} \right\|}^2={\sin ^2}(\eta) - {(\eta')^2} > 0.
	\end{eqnarray}
	Moreover, let $ s=s(t) $ be the arc length parameter of $ \alpha $ such that $ \left\| {\alpha'} \right\|(t)=s'(t) $. Then we have $ \left( {\frac{T_{\alpha}'}{{\left\| {\alpha'} \right\|}} - \alpha} \right)(t) = ({\kappa_g}{N_\alpha})(s)$ by using (\ref{eq2_06}). It means that $ N_\alpha $ is parallel to the spacelike  vector field $ \left( {\frac{T_{\alpha}'}{{\left\| {\alpha'} \right\|}} - \alpha} \right)
	$.	
	However, let $ \left\{ {\gamma,\gamma',{N_\gamma}} \right\} $ be Sabban frame (curve-surface frame) of the timelike unit speed curve $ \gamma $ in $\mathbb{S}_1^2 \subset {T_p}\mathbb{S}_1^3 $ where geodesic curvature of $ \gamma $ is defined by $ {\kappa_\gamma} = \det \left( {\gamma,\gamma',\gamma'',p} \right) $. Then, by using (\ref{eq3_05}) and Gauss formula, we have
	\begin{eqnarray}\label{eq3_29}
	\gamma'' = \gamma + {\kappa_\gamma}{N_\gamma}
	\end{eqnarray}
	where spacelike principal normal $ N_\gamma=\gamma \wedge \gamma' $ is tangent to $ \mathbb{S}_1^2 $, but normal to $ p $ and $ \gamma $. We get
	\begin{eqnarray*}
		< p,\frac{1}{\left\| {\alpha'} \right\|}{T_{\alpha}'} - \alpha >  = \frac{1}{\left\| {\alpha'} \right\|}{\left( {\frac{{\eta'}}{\left\| {\alpha'} \right\|}\sin (\eta)} \right)^\prime } + \cos (\eta).
	\end{eqnarray*}
	by using (\ref{eq3_28}) and (\ref{eq3_29}). According to the Theorem \ref{teo3_03}, $ \alpha $ is a timelike rectifying curve in $ \mathbb{S}_1^3 $ if and only if $  < p,{N_\alpha} >  = 0 $. Thus, it must be $ \frac{1}{\left\| {\alpha'} \right\|}{\left( {\frac{{\eta'}}{\left\| {\alpha'} \right\|}\sin (\eta)} \right)^\prime } + \cos (\eta)=0 $. After basic calculations, we reach to the differential equation
	\begin{eqnarray}\label{eq3_30}
	\sin (\eta)\eta'' - 2\cos (\eta){{(\eta')}^2} + \cos (\eta){{\sin }^2}(\eta)=0,
	\end{eqnarray}
	since $ N_\alpha $ is parallel to $  \left( {\frac{1}{\left\| {\alpha'} \right\|}{T_{\alpha}'} - \alpha } \right)$. Now, we consider a differentiable function $ h=h(t) $ such that $ \eta(t)=\arctan(h(t)) $ for solving the equation (\ref{eq3_30}). In that case, we reach to the equation
	\begin{eqnarray*}
		\frac{1}{{{{(1 + {h^2})}^{3/2}}}}(h\,h'' - 2{(h')^2} + {h^2}) = 0.
	\end{eqnarray*}
	The nontrivial solutions of this differential equation are given by the function $h(t)=a\sech (t+t_0)$ for some constants $ a \neq 0 $ and $ t_0 $. Thus, we have that $\alpha(t) = \exp_p(\eta(t){\gamma(t)})$ is a timelike rectifying curve in $ \mathbb{S}_1^3 $ iff $ \eta(t)=\tan^{-1}(a \sech(t+t_0)) $ for some constants $ a \neq 0 $ and $ t_0 $.
\end{proof}	

Chen and Dillen give some characterizations for rectifying curves with the viewpoint of extremal curves in Euclidean 3-space \cite{Chen2005}. Riemannian viewpoint of this idea is introduced by Lucas and Yag\"ues in Minkowski model of hyperbolic 3-space as Riemannian space form with negative constant curvature \cite{luc2016}.	

Now, we give some characterizations for Lorentzian version of timelike rectifying curves from the viewpoint of extremal curves in $ \mathbb{S}_1^3 $ which is Lorentzian space form with positive constant curvature 1. 

\begin{definition}\label{def3_03}
	Let $ \alpha $ be a timelike curve in $ \mathbb{S}_1^3 $, is given by $\alpha(t) = \exp_p(\eta(t){\gamma(t)})$ where $ p \in \mathbb{S}_1^3  $, $\eta(t)\neq0$ is an arbitrary function and $ \gamma(t) $ is a timelike curve lying in $\mathbb{S}_1^2 \subset {T_p}\mathbb{S}_1^3 $. Then $ \gamma $ is called the timelike pseudo-spherical projection of $ \alpha $.
\end{definition}

The following characterization means that a timelike rectifying curve in $ \mathbb{S}_1^3 $ is actually an extremal curve which assumes the the minimum value of the function $ \frac{{{\left\| {\alpha'} \right\|^4}\kappa_g^2}}{{{\sin}^2}(\eta)} $ at each point among the curves with the same timelike pseudo-spherical projection.

\begin{theorem}\label{teo3_05}
	Let $ p$ be a fixed point in $\mathbb{S}_1^3 $ and $ \gamma=\gamma(t) $ be a timelike unit speed curve with geodesic curvature $ \kappa_\gamma $ in $ \mathbb{S}_1^2 \subset {T_p}\mathbb{S}_1^3 $. Then, for any nonzero function  $  \eta(t) $, the geodesic curvature $ \kappa_g $ of a timelike regular curve $ \alpha $ in $\mathbb{S}_1^3$ which is given by $ \alpha(t) = \exp_p(\eta(t){\gamma(t)}) $, and $ \kappa_\gamma $ satisfy the inequality
	\begin{eqnarray} \label{eq3_30a}
	\kappa_\gamma^2 \le \frac{{{\left\| {\alpha'} \right\|^4}\kappa_g^2}}{{{\sin}^2}(\eta)},
	\end{eqnarray}
	with the equality sign holding identically if and only if $ \alpha $ is a timelike rectifying curve in $ \mathbb{S}_1^3 $.	
\end{theorem}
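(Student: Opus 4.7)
The plan is to work in the pseudo-orthonormal frame $\{p,\gamma,\gamma',N_\gamma\}$ of $\mathbb{R}_1^4$ along the curve, in which $p$, $\gamma$, $N_\gamma$ are spacelike unit vectors and $\gamma'$ is a unit timelike vector, so the signature is $(+,+,-,+)$. I will expand both $\alpha$ and $\kappa_g N_\alpha$ in this basis and exploit that the coefficient of $N_\gamma$ is the only one carrying $\kappa_\gamma$, while the remaining coefficients are constrained by the orthogonality of $N_\alpha$ to both $\alpha$ and $T_\alpha$. Throughout I set $L := \|\alpha'\|$, with $L^2 = \sin^2\eta-(\eta')^2$ as in the proof of Theorem \ref{teo3_04}.

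First I would expand $\alpha = \cos\eta\,p + \sin\eta\,\gamma$ and $T_\alpha = \alpha'/L$ in the basis, differentiate $T_\alpha$ once more, and use the Sabban relation $\gamma'' = \gamma + \kappa_\gamma N_\gamma$ from (\ref{eq3_29}) together with the identity $\kappa_g N_\alpha = T_\alpha'/L - \alpha$ (derived from (\ref{eq2_06})). This produces
\[
\kappa_g N_\alpha \;=\; A\,p + B\,\gamma + C\,\gamma' + D\,N_\gamma,\qquad D \;=\; \frac{\sin\eta\,\kappa_\gamma}{L^2},
\]
where $A$, $B$, $C$ depend only on $\eta,\eta',\eta''$ and $L$ (no $\kappa_\gamma$ appears there). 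The two built-in orthogonality relations $\langle N_\alpha,\alpha\rangle = 0$ and $\langle N_\alpha,T_\alpha\rangle = 0$ then translate into the linear system
\[
A\cos\eta + B\sin\eta \;=\; 0,\qquad -A\,\eta'\sin\eta + B\,\eta'\cos\eta - C\sin\eta \;=\; 0,
\]
from which $B$ and $C$ can be solved explicitly in terms of $A$. Substituting and simplifying, the key identity
\[
A^2 + B^2 - C^2 \;=\; \frac{A^2\,L^2}{\sin^4\eta} \;\ge\; 0
\]
emerges, where the nonnegativity comes precisely from the timelike condition $L^2>0$.

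Since the squared Lorentzian length of $\kappa_g N_\alpha$ in signature $(+,+,-,+)$ is $\kappa_g^2 = A^2 + B^2 - C^2 + D^2$, the clean decomposition
\[
\kappa_g^2 \;=\; \frac{A^2\,L^2}{\sin^4\eta} + \frac{\sin^2\eta\,\kappa_\gamma^2}{L^4}
\]
follows, which immediately yields $\kappa_\gamma^2 \le L^4\kappa_g^2/\sin^2\eta$ with equality iff $A = 0$. Finally, $A = \langle p,\kappa_g N_\alpha\rangle = \kappa_g\,\langle p,N_\alpha\rangle$ and $\kappa_g > 0$, so equality is equivalent to $\langle p,N_\alpha\rangle = 0$, which by Theorem \ref{teo3_03}(iii) is exactly the rectifying condition on $\alpha$.

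The main obstacle I expect is establishing the nonnegativity of $A^2+B^2-C^2$: in the mixed-signature $3$-plane $\mathrm{span}\{p,\gamma,\gamma'\}$ no general Cauchy-Schwarz bound is available, so the sign must be extracted directly from the two orthogonality constraints combined with the inequality $\sin^2\eta>(\eta')^2$ that encodes the timelike character of $\alpha$. Beyond that the argument is careful but routine algebra; a small separate check handles the isolated locus $\cos\eta = 0$, where the first orthogonality relation forces $B = 0$ immediately and the same conclusions go through.
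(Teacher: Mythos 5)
Your proposal is correct --- I checked the key identities: with $\kappa_g N_\alpha=\frac{T_\alpha'}{L}-\alpha=A\,p+B\,\gamma+C\,\gamma'+D\,N_\gamma$ one indeed gets $D=\sin(\eta)\kappa_\gamma/L^2$, the constraints $\langle \kappa_gN_\alpha,\alpha\rangle=0$ and $\langle \kappa_gN_\alpha,\alpha'\rangle=0$ give $B=-A\cot(\eta)$ and $C=-A\eta'/\sin^2(\eta)$, hence $A^2+B^2-C^2=A^2L^2/\sin^4(\eta)\ge 0$ and $\kappa_g^2=A^2L^2/\sin^4(\eta)+\sin^2(\eta)\kappa_\gamma^2/L^4$ --- but it takes a genuinely different route from the paper. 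The paper never touches the frame $\{p,\gamma,\gamma',N_\gamma\}$; instead it writes $N_\gamma=\cos(\theta)N_\alpha+\sin(\theta)B_\alpha$ for an auxiliary angle $\theta$, differentiates to get $\kappa_\gamma^2=(L\kappa_g\cos\theta)^2-(\theta'+L\tau_g)^2$, then expresses the fixed point $p$ in the curve--hypersurface frame of $\alpha$ and uses $\langle p,p\rangle=1$ to eliminate the term $(\theta'+L\tau_g)^2$, arriving at $\kappa_\gamma^2=L^4\kappa_g^2\cos^2(\theta)/\sin^2(\eta)$; equality then means $\sin\theta=0$, i.e.\ $N_\gamma\parallel N_\alpha$, and the rectifying conclusion comes via Theorem \ref{teo3_01}, whereas you go through $A=\kappa_g\langle p,N_\alpha\rangle=0$ and Theorem \ref{teo3_03}(iii). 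Your version buys several things: it avoids the auxiliary angle and the indirect elimination step; it never divides by $\kappa_\gamma$ (the paper's coefficients $\sigma_1,\sigma_2$ implicitly assume $\kappa_\gamma\neq0$); and it remains valid where $\eta'=0$, a locus on which the paper's formulas (\ref{eq3_37})--(\ref{eq3_38}) degenerate (they would force $\langle p,N_\alpha\rangle=0$ there, while your direct computation gives $A=-\cos\eta$). What the paper's route buys is the intermediate relation involving $\tau_g$ and the explicit geometric interpretation of the defect as $\cos^2(\theta)$, the angle between $N_\gamma$ and $N_\alpha$. Your remark that the only genuinely delicate point is the sign of $A^2+B^2-C^2$ in a mixed-signature $3$-plane is well taken, and your resolution via the two orthogonality constraints plus $L^2=\sin^2(\eta)-(\eta')^2>0$ is exactly right; the separate check at $\cos\eta=0$ is in fact unnecessary since you only ever divide by $\sin\eta$, which is forced to be nonzero by $L^2>0$.
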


\begin{proof}
	Let $ \eta $ be a nonzero function and $ \alpha $ be a timelike regular curve in $\mathbb{S}_1^3$ which is given by $ \alpha(t) = \exp_p(\eta(t){\gamma(t)}) $, where $ \gamma $ is a timelike unit speed curve in $ \mathbb{S}_1^2 \subset {T_p}\mathbb{S}_1^3 $. If we consider together the equation (\ref{eq3_28}) with definition the map $\exp_p$, and taking account that $N_\gamma$ is a spacelike vector orthogonal to timelike subspace  $S_p\left\lbrace p,\gamma,\gamma' \right\rbrace $, and so we find that  $N_\gamma$ is orthogonal to both $ \alpha $ and $ T_\alpha $. Then, we have
	\begin{eqnarray} \label{eq3_31}
	N_\gamma=\gamma\wedge \gamma'=\cos(\theta)N_\alpha+\sin(\theta)B_\alpha
	\end{eqnarray}
	such that $\theta=\theta(t)$ is a arbitrary function. 
	By differentiating of (\ref{eq3_31}) with respect to $ t $, in addition to applying (\ref{eq2_06}) and (\ref{eq3_29}), we obtain
	\begin{eqnarray}  \label{eq3_33}
	\kappa_{\gamma} \gamma'=(\left\| {\alpha'} \right\|\kappa_g \cos(\theta))T_\alpha+(\theta'+\left\| {\alpha'} \right\|\tau_g)(-\sin(\theta)N_\alpha + \cos(\theta)B_\alpha)
	\end{eqnarray}
	where  $\left\| {\alpha'} \right\|$ satisfies the eq. (\ref{eq3_27}). Since $\left\langle \gamma', \gamma'\right\rangle =-1$ and $\left\langle T_\alpha, T_\alpha\right\rangle=-1$, we have
	\begin{eqnarray} \label{eq3_34}
	\kappa_{\gamma}^2=(\left\| {\alpha'} \right\| \kappa_g \cos(\theta))^{2}-(\theta'+\left\| {\alpha'} \right\|\tau_g)^{2}.
	\end{eqnarray} 
	
	Now, we will give the point $ p $ with respect to the curve-hypersurface frame $\left\lbrace\alpha,T_\alpha,N_\alpha,B_\alpha \right\rbrace $ of $ \alpha $. By using (\ref{eq3_26}), we get
	\begin{eqnarray} \label{eq3_35}
	\left\langle p, \alpha\right\rangle=\cos(\eta),
	\end{eqnarray} 
	and after differentiating of (\ref{eq3_35}),
	\begin{eqnarray} \label{eq3_36}
	\left\langle p,T_\alpha \right\rangle =-\frac{\eta'\sin(\eta)}{\left\| {\alpha'} \right\|}
	\end{eqnarray}
	by using (\ref{eq3_28}). Now, suppose that
	\begin{eqnarray*}
		\gamma'={\sigma_1}T_\alpha+{\sigma_2}(-\sin(\theta)N_\alpha+\cos(\theta)B_\alpha)	
	\end{eqnarray*}
	such that
	\begin{eqnarray*}
		{\sigma_1}=\frac{{\left\| {\alpha'} \right\|{\kappa_g}\cos (\theta)}}{{{\kappa_\gamma}}},~ {\sigma_2}=\frac{{\theta' + \left\| {\alpha'} \right\|{\tau_g}}}{{{\kappa_\gamma}}}
	\end{eqnarray*}
	by using (\ref{eq3_33}). Since  $\left\langle p,\gamma'\right\rangle =0 $ and $\left\langle p,N_\gamma\right\rangle=0$, we obtain linear equation system depending on $ \left\langle p,N_\alpha\right\rangle $ and $ \left\langle p,B_\alpha\right\rangle $ by using (\ref{eq3_31}) and (\ref{eq3_36}) and so its solution is given by 
	\begin{eqnarray} \label{eq3_37}
	\left\langle {p,{N_\alpha}} \right\rangle  = \frac{{{\sigma_1}}}{{{\sigma_2}}}\frac{{\eta'}}{{\left\| {\alpha'} \right\|}}\sin (\theta)\sin (\eta),\\ \label{eq3_38}
	\left\langle {p,{B_\alpha}} \right\rangle  =  - \frac{{{\sigma_1}}}{{{\sigma_2}}}\frac{{\eta'}}{{\left\| {\alpha'} \right\|}}\cos (\theta)\sin (\eta).
	\end{eqnarray}
	From the equations (\ref{eq3_35})-(\ref{eq3_38}), we get
	\begin{eqnarray*}
		p=\cos(\eta)\alpha-\frac{\eta'\sin(\eta)}{\left\| {\alpha'} \right\|}T_\alpha+\frac{{\sigma_1}\eta'\sin(\theta)\sin(\eta)}{{\sigma_2}\left\| {\alpha'} \right\|}N_\alpha-\frac{{\sigma_1}\eta'\cos(\theta)\sin(\eta)}{{\sigma_2}\left\| {\alpha'} \right\|}B_\alpha.
	\end{eqnarray*}
	Then, 
	\begin{eqnarray*}
		1=\left\langle p,p \right\rangle= \cos^{2}(\theta)+\sin^{2}(\theta)\left( \frac{\eta'}{\left\| {\alpha'} \right\|}\right) ^{2}\left( \left(\frac{{\sigma_1}}{\sigma_2} \right)^{2}-1\right) 
	\end{eqnarray*} 
	and so it must be 
	\begin{eqnarray*}
		\left( \frac{\eta'}{\left\| {\alpha'} \right\|}\right)^{2} \left( \left( {\frac{{{\sigma_1}}}{{{\sigma_2}}}} \right)^{2}-1\right)=1.
	\end{eqnarray*}
	Also, if we put $ {\sigma_1} $ and $ {\sigma_2} $ in this equaiton, we reach to the equation
	\begin{eqnarray} \label{eq3_39}
	(\theta'+\left\| {\alpha'} \right\|\tau_g)^{2}=\frac{(\eta')^2}{\left\| {\alpha'} \right\|^{2}+(\eta')^2}(\left\| {\alpha'} \right\|\kappa_g \cos(\theta))^{2}.
	\end{eqnarray}
	
	After putting (\ref{eq3_39}) in (\ref{eq3_34}), and also considering the equation (\ref{eq3_27}), we get
	\begin{eqnarray*} \label{eq3_40}
		\kappa_{\gamma}^{2}=\frac{\left\| {\alpha'} \right\|^{4}{\kappa_g}^2 \cos^2(\theta)}{\sin^2(\eta)}
	\end{eqnarray*}
	which implies inequality (\ref{eq3_30a}). It is clear that, equality situation of (\ref{eq3_30a}), it must be $\sin(\theta)=0$. Also, take into consideration the equation (\ref{eq3_31}), we get  $N_\gamma=\mp N_\alpha$. It means that $N_\gamma//N_\alpha$. Then, by Theorem \ref{teo3_01}, $\alpha$ is a geodesic in timelike conical surface $ M$ which is given by the parametrization (\ref{eq3_01}) and so it is a timelike rectifying curve. Therefore, $ \alpha $ is  a timelike rectifying curve in $ \mathbb{S}_1^3$ iff equality situation of (\ref{eq3_30a}) is satisfied. 
\end{proof}

Now, we show that a timelike curve in  De Sitter 3-space, which has non-zero constant geodesic curvature and linear geodesic torsion, congruent to a timelike rectifying curve, which is generated by a spiral type unit speed timelike curve with certain geodesic curvature in 2-dimensional pseudo-sphere, and vice versa. Namely, the following corollary is a construction method for timelike spiral type rectifying curves in $ \mathbb{S}_1^3 $.

\begin{corollary}\label{cor3_01}
	A timelike regular curve $ \alpha $ is given by $ \alpha(s)=\exp_p(\eta(s)\gamma(s)) $ in $ \mathbb{S}_1^3 $ with nonzero constant geodesic curvature $ \kappa_0 $ and linear geodesic torsion
	\begin{eqnarray*}
		{\tau_g}(s) = {c_1}\sinh \left( {s + {s_0}} \right) + {c_2}\cosh \left( {s + {s_0}} \right)
	\end{eqnarray*}
	such that $ {{c_2}}^2 - {{c_1}}^2 - {\kappa_0}^2 < 0 $ for some constants $ {c_1} $, $ {c_2} $ and $ s_0 $ iff $ \alpha $ is congruent to a timelike rectifying curve which is generated by a unit speed timelike spiral type curve $ \gamma(t) $ in  $ \mathbb{S}_1^2 \subset {T_p}\mathbb{S}_1^3 $ with geodesic curvature
	\begin{eqnarray*}
		{\kappa_\gamma}(t) = {b} {({\cosh ^2}(t + {t_0}) + {a^2})^{-3/2}}
	\end{eqnarray*}
	for some constants  $ a\neq0,\,{b}\neq0 $ and $ t_0 $.
\end{corollary}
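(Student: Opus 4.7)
The plan is to combine Theorem \ref{teo3_02} (the sinh-cosh characterization of the ratio $\tau_g/\kappa_g$ for rectifying curves) with Theorem \ref{teo3_04} (the explicit form $\eta(t)=\arctan(a\sech(t+t_0))$ of the radial function) and the geodesic relation \eqref{eq3_15}, namely $\kappa_g=-(u')^2\kappa_\gamma(u)\sin(v)$, in order to transfer the curvature hypothesis on $\alpha$ into a curvature condition on the pseudo-spherical directrix $\gamma$.

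For the forward direction, I would first observe that the hypothesis $\kappa_g\equiv\kappa_0$, $\tau_g(s)=c_1\sinh(s+s_0)+c_2\cosh(s+s_0)$ yields $\tau_g/\kappa_g=(c_1/\kappa_0)\sinh(s+s_0)+(c_2/\kappa_0)\cosh(s+s_0)$, and the inequality $c_2^2-c_1^2-\kappa_0^2<0$ is exactly $(c_2/\kappa_0)^2-(c_1/\kappa_0)^2<1$. Theorem \ref{teo3_02} then certifies that $\alpha$ is congruent to a rectifying curve, and Theorem \ref{teo3_04} provides, up to reparametrization, the representation $\alpha(t)=\exp_p(\eta(t)\gamma(t))$ with $\eta(t)=\arctan(a\sech(t+t_0))$ and $\gamma$ a unit-speed timelike curve in $\mathbb{S}_1^2\subset T_p\mathbb{S}_1^3$. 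Identifying $u=t$, $v=\eta$, and $u'=dt/ds=1/\|\alpha'\|$, relation \eqref{eq3_15} becomes $\kappa_0=-\kappa_\gamma(t)\sin(\eta(t))/\|\alpha'(t)\|^2$; substituting $\sin(\eta)=a\sech(t+t_0)/\sqrt{1+a^2\sech^2(t+t_0)}$ together with \eqref{eq3_27}, which gives $\|\alpha'\|^2=a^2\sech^4(t+t_0)(1+a^2)/(1+a^2\sech^2(t+t_0))^2$, and using the identity $1+a^2\sech^2(t+t_0)=(\cosh^2(t+t_0)+a^2)/\cosh^2(t+t_0)$, one obtains $\sin(\eta)/\|\alpha'\|^2=(\cosh^2(t+t_0)+a^2)^{3/2}/[a(1+a^2)]$, so that $\kappa_\gamma(t)=b(\cosh^2(t+t_0)+a^2)^{-3/2}$ with $b=-\kappa_0\, a(1+a^2)\neq 0$.

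For the converse, I would start with $\gamma$ unit-speed in $\mathbb{S}_1^2$ having the prescribed geodesic curvature $\kappa_\gamma(t)=b(\cosh^2(t+t_0)+a^2)^{-3/2}$ and set $\alpha(t)=\exp_p(\eta(t)\gamma(t))$ with $\eta(t)=\arctan(a\sech(t+t_0))$ using the \emph{same} constants $a,t_0$. By Theorem \ref{teo3_04}, $\alpha$ is a timelike rectifying curve, so Theorem \ref{teo3_02} makes $\tau_g/\kappa_g$ of sinh-cosh type with $\mu_2^2-\mu_1^2<1$. Reversing the computation of the forward step yields $\kappa_g\equiv -b/[a(1+a^2)]$, a nonzero constant $\kappa_0$; multiplying $\tau_g/\kappa_g$ by $\kappa_0$ produces the advertised linear geodesic torsion $\tau_g(s)=c_1\sinh(s+s_0)+c_2\cosh(s+s_0)$, and the inequality $\mu_2^2-\mu_1^2<1$ translates into $c_2^2-c_1^2-\kappa_0^2<0$.

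The main technical step, and essentially the only place where real computation is needed, is the hyperbolic-trigonometric simplification reducing $\sin(\eta)/\|\alpha'\|^2$ to $(\cosh^2(t+t_0)+a^2)^{3/2}/[a(1+a^2)]$ when $\eta=\arctan(a\sech(t+t_0))$; once this identity is established, the equivalence is a clean application of Theorems \ref{teo3_02} and \ref{teo3_04}. A secondary care point is that, in the converse, the free parameters inherited from Theorem \ref{teo3_04} in $\eta$ must be chosen equal to the constants $(a,t_0)$ appearing in $\kappa_\gamma$, since otherwise the quotient $(\cosh^2(t+t_0')+a'^2)^{3/2}/(\cosh^2(t+t_0)+a^2)^{3/2}$ would fail to be constant and $\kappa_g$ would not be constant.
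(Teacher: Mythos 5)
Your proposal is correct and follows essentially the same route as the paper: Theorem~\ref{teo3_02} to certify the rectifying property, Theorem~\ref{teo3_04} to pin down $\eta(t)=\arctan(a\sech(t+t_0))$, and then a relation between $\kappa_\gamma$ and $\kappa_g$ combined with the hyperbolic simplification of $\sin(\eta)/\left\| \alpha' \right\|^2$. The only cosmetic difference is that you extract that relation from \eqref{eq3_15} while the paper quotes the equality case of Theorem~\ref{teo3_05}, $\kappa_\gamma^2=\left\| \alpha' \right\|^4\kappa_g^2/\sin^2(\eta)$, which is the same identity up to squaring; your explicit remark that the constants $(a,t_0)$ appearing in $\kappa_\gamma$ must coincide with those in $\eta$ is a care point the paper leaves implicit.
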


\begin{proof}
	Let  $\alpha = \exp_p(\eta\,{\gamma})$ be a timelike regular curve in $\mathbb{S}_{1}^3$ with nonzero constant geodesic curvature $ \kappa_0 $ and geodesic torsion $\tau_g(s)={c_1}\sinh(s+s_0)+{c_2}\cosh(s+s_0)$ in arc length parameter $ s  $ where  ${c_2}^{2}-{c_1}^{2}-\kappa_0^{2}<0$ for some constants ${c_1}$ , ${c_2}$ and $s_0$. Then, $\alpha$ is a timelike rectifying curve by Theorem \ref{teo3_02}. Hence, by using Theorem \ref{teo3_04}, we take $\eta(t)=\arctan(a \sech (t+t_0))$ for some constants $a\neq0$ and $t_0$. Also, taking account of Theorem \ref{teo3_05}, we obtain that $\kappa_{\gamma}(t)={b}(\cosh^2(t+t_0)+a^2)^{-3/2}$ for nonzero constant $b=a(1+a^2)\kappa_0$.
	
	On the other hand, let  $\alpha=\exp_p(\eta\,\gamma)$ be a timelike rectifying curve in $\mathbb{S}_{1}^3$ which is generated by a timelike unit speed curve $\gamma=\gamma(t)$ which is lying in $\mathbb{S}_{1}^2 \subset T_p{\mathbb{S}_{1}^3}$ with geodesic curvature $\kappa_{\gamma}(t)=b(\cosh^2(t+t_0)+a^2)^{-3/2}$ such that $ b \neq 0 $. Then, by using Theorem \ref{teo3_04}, we get function $\eta(t)=\arctan(a \sech (t+t_0))$ for some constants  $a\neq0$ and $t_0$, and so, we have
	\begin{eqnarray*}
		\frac{\left\| {\alpha'} \right\|^4}{\sin^2(\eta)}=\frac{a(1+a^2)^2}{(a^2+ \cosh^2(t+t_0))^3}.
	\end{eqnarray*}
	Since  $\alpha$ is a timelike rectifying curve, in accordance with Theorem \ref{teo3_05}, 
	\begin{eqnarray*}
		\kappa_{\gamma}^{2}=	\frac{\left\| {\alpha'} \right\|^4\kappa_g^{2}}{\sin^2(\eta )}.
	\end{eqnarray*}
	Thus, we obtain that the nonzero constant
	\begin{eqnarray*}
		\kappa_g^2=\frac{b^2}{a^2(1+a^2)^2}.
	\end{eqnarray*} 
	Finally, if we take into account Theorem \ref{teo3_02}, the proof is complete.
\end{proof}

\section{Some Examples of Non-Degenerate Rectifying Curves} \label{Sec:4}

Now, we give some examples for timelike or spacelike rectifying curve in $ {\mathbb{S}_{1}^3} $.

\begin{figure}[!ht]
	\centerline{\includegraphics[width=0.6\textwidth,keepaspectratio]{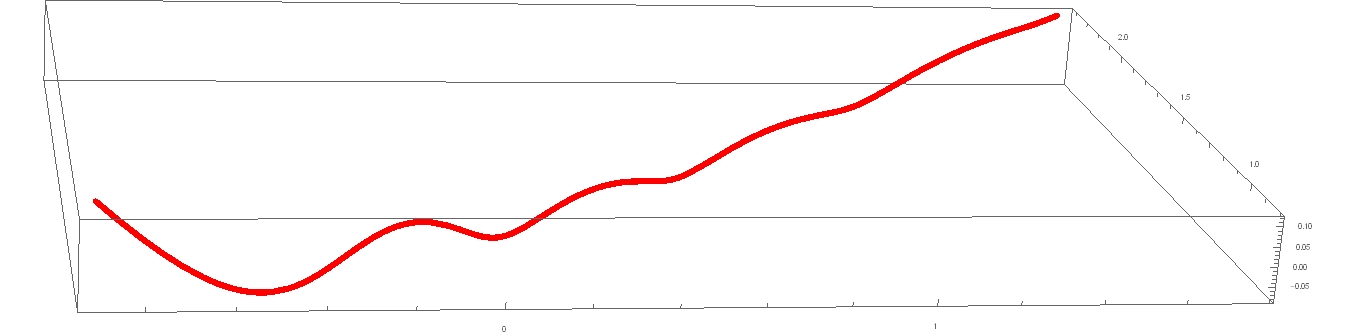}}
	\caption{Stereographic projection in Minkowski 3-space of timelike rectifying curve $ \alpha $}
	\label{fig5_05}
\end{figure}

\begin{figure}[!ht]
	\centering
	\subfloat[]{%
		\includegraphics[width=0.40\textwidth,keepaspectratio]{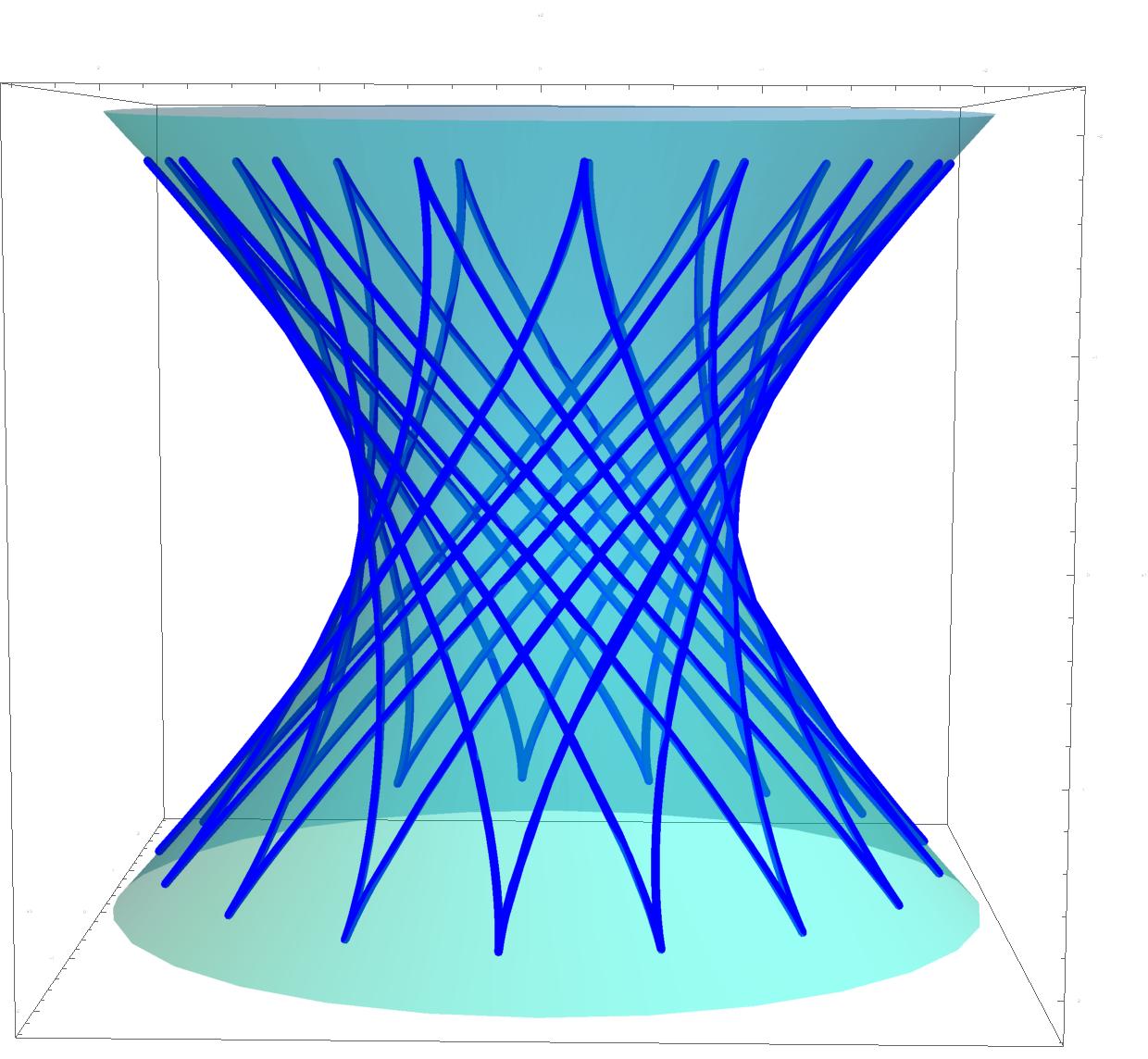}
		\label{fig5_01}}
	\quad
	\subfloat[]{%
		\includegraphics[width=0.35\textwidth,keepaspectratio]{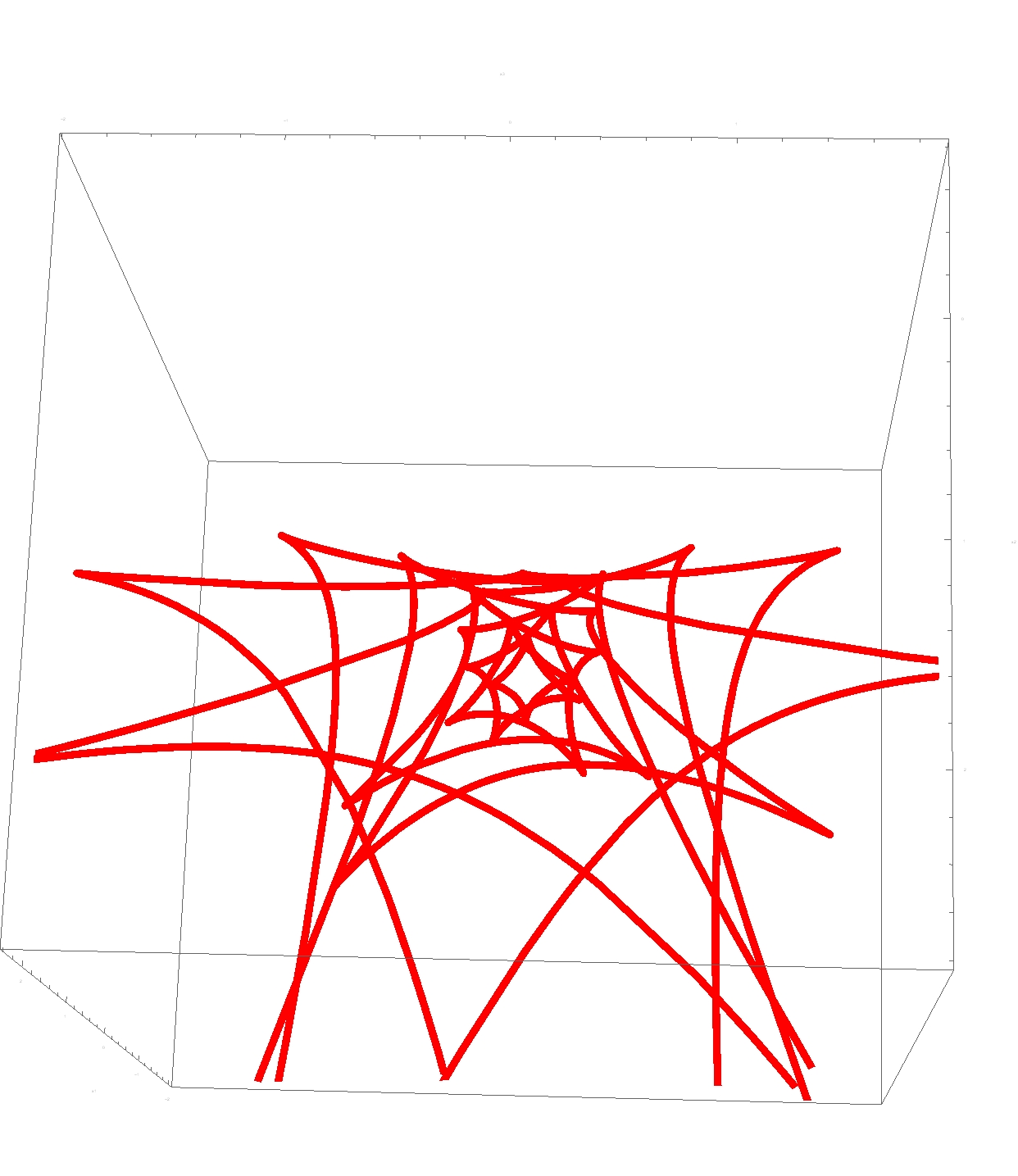}
		\label{fig5_02}}
	
	\caption{(a) Timelike pseudo-spherical projection curve $ \gamma(t) $ in $ {\mathbb{S}_{1}^2} $ of timelike rectifying curve $ \alpha $ \\ (b) Stereographic projection in Minkowski 3-space of timelike rectifying curve $ \alpha $ }
\end{figure}

\begin{figure}[!ht]
	\centering
	\subfloat[]{%
		\includegraphics[width=0.4\textwidth,keepaspectratio]{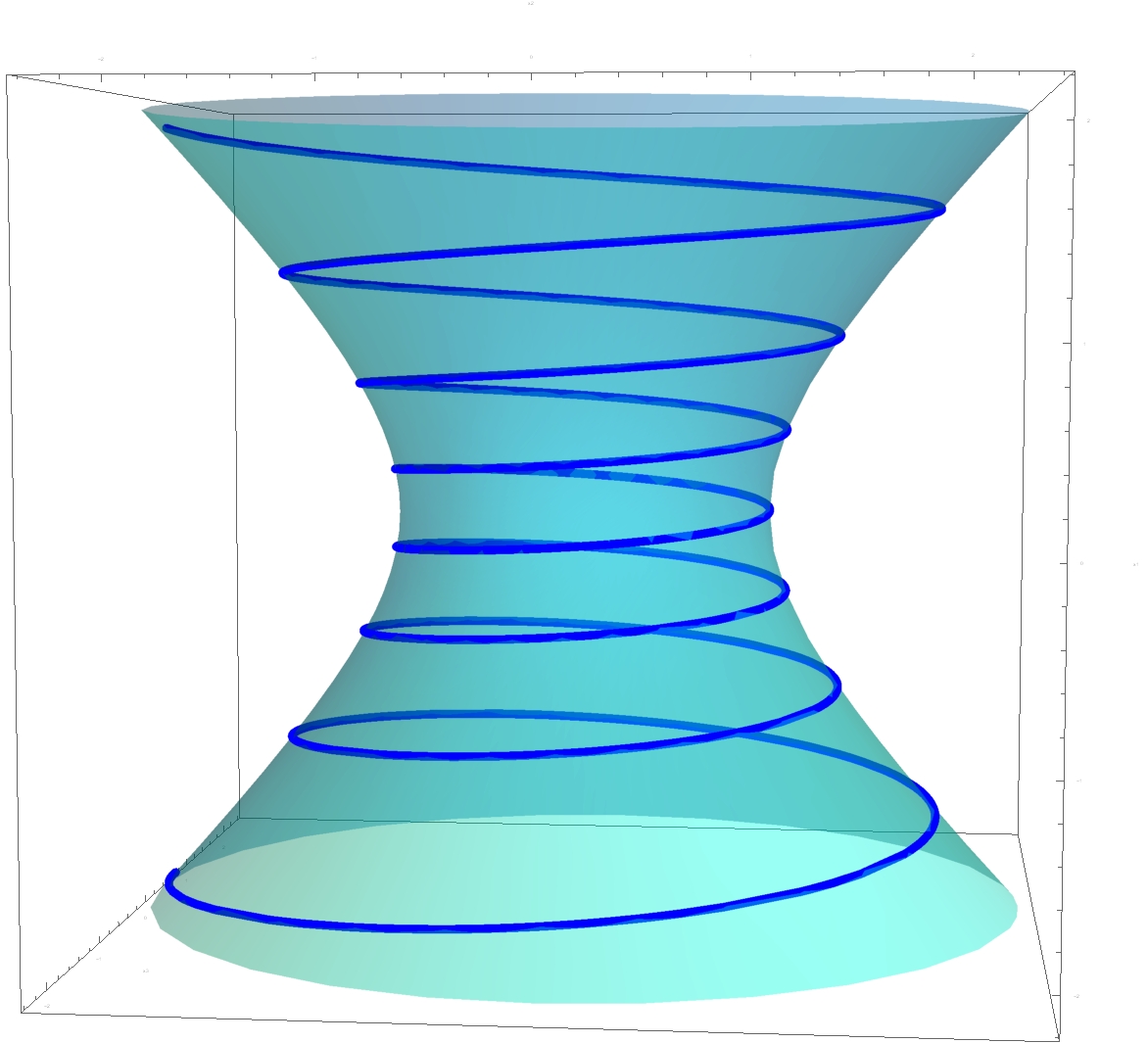}
		\label{fig5_03}}
	\quad
	\subfloat[]{%
		\includegraphics[width=0.4\textwidth,keepaspectratio]{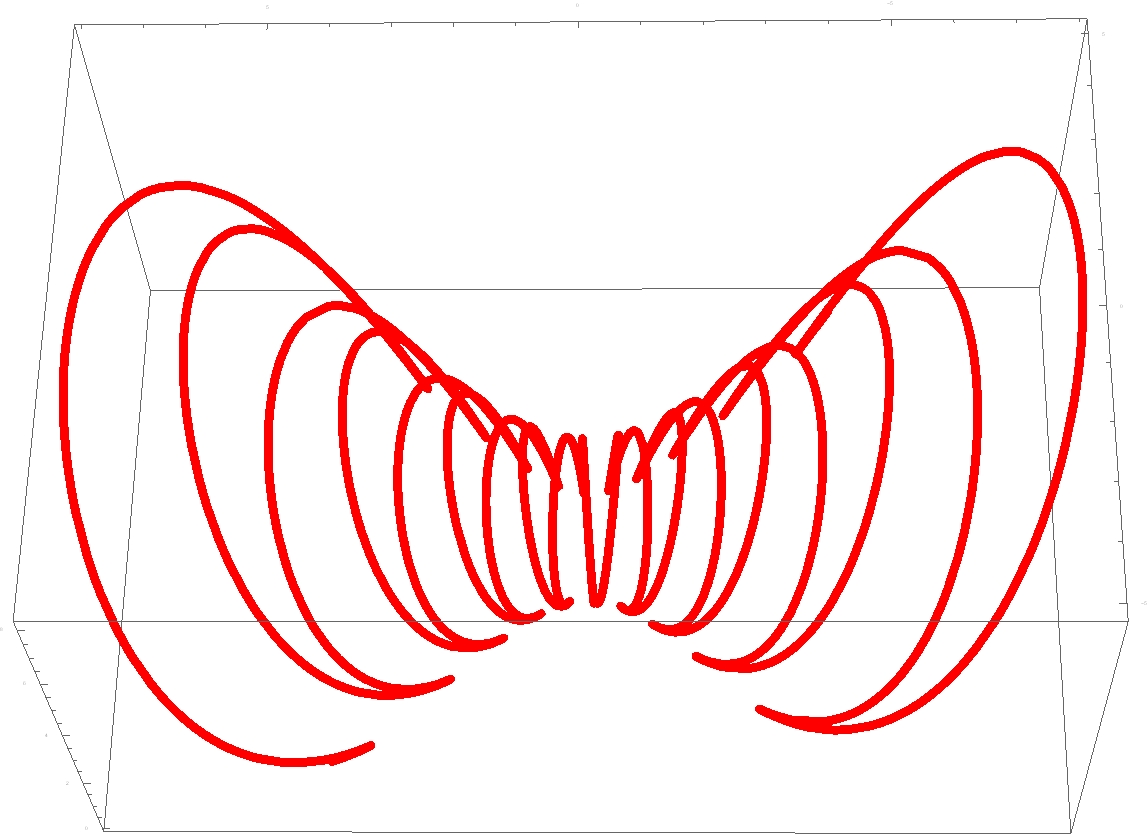}
		\label{fig5_04}}
	
	\caption{(a) Spacelike pseudo-spherical projection curve $ \gamma(t) $ in $ {\mathbb{S}_{1}^2} $ of spacelike rectifying curve $ \alpha $ \\ (b) Stereographic projection in Minkowski 3-space of spacelike rectifying curve $ \alpha $}
\end{figure}

\begin{example}
	Let $ \alpha $ be a  timelike rectifying curve in $ {\mathbb{S}_{1}^3} $ with geodesic curvature $ \kappa_{g}(s)= 10$ and geodesic torsion $ \tau_g(s)=2 \sinh(s)+2\cosh(s) $. Then, we obtain stereographic projection in Minkowski 3-space which is congruent to $ \alpha $ by using numeric methods in Mathematica (see Figure \ref{fig5_05})	
\end{example}

\begin{example}
	Let a timelike pseudo-spherical projection curve be given by
	\begin{eqnarray*}
		\gamma(t) = \left( {\frac{{15}}{8}\cos \left( {17t} \right),0,\frac{{25}}{{16}}\cos \left( {9t} \right) + \frac{9}{{16}}\cos \left( {25t} \right),\frac{{25}}{{16}}\sin \left( {9t} \right) - \frac{9}{{16}}\sin \left( {25t} \right)} \right),	
	\end{eqnarray*}
	in $\mathbb{S}_{1}^2 \subset T_p{\mathbb{S}_{1}^3}$ (see Figure \ref{fig5_01}). Then, the parametrization of timelike rectifying curve $ \alpha $ is
	\begin{eqnarray*}
		\alpha(t) = \frac{{{\mathop{\rm sech}\nolimits} (t)}}{{16\sqrt {1 + {{{\mathop{\rm sech}\nolimits} }^2}(t)} }}\left( {30\cos (17t)\,{\mathop{\rm sech}\nolimits} (t),\frac{{16}}{{{\mathop{\rm sech}\nolimits} (t)}},25\cos (9t) + 9\cos (25t),25\sin (9t) - 9\sin (25t)} \right)	
	\end{eqnarray*}
	where the point $p=(0,1,0,0)\in\mathbb{S}_{1}^3$ and the function $ \eta(t)=\arctan(\sech(t))$
	(see Figure \ref{fig5_02}).
\end{example}

\begin{example}
	Let a spacelike pseudo-spherical projection curve be given by
	\begin{eqnarray*}
		\gamma(t) = \left( {\sinh (\frac{t}{{15}}),\,\,\cosh (\frac{t}{{15}})\cos (t),\,\,\cosh (\frac{t}{{15}})\sin (t),0} \right),	
	\end{eqnarray*}	
	in $\mathbb{S}_{1}^2 \subset T_p{\mathbb{S}_{1}^3}$ (see Figure \ref{fig5_03}). Then, the parametrization of timelike rectifying curve $ \alpha $ is
	\begin{eqnarray*}
		\alpha(t) = \frac{1}{{\sqrt {1 + {{\sec }^2}(t)} }}\left( {\sec(t)\sinh (\frac{t}{{15}}),\cosh (\frac{t}{{15}}),\cosh (\frac{t}{{15}})\tan (t),1} \right)	
	\end{eqnarray*}
	where the point $p=(0,0,0,1)\in\mathbb{S}_{1}^3$ and the function $ \eta(t)=\arctan(\sec(t))$ (see Figure \ref{fig5_04}).
\end{example}

\end{document}